\numberwithin{equation}{section}
\newtheorem{theorem}[equation]{Theorem}
\newtheorem{lemma}[equation]{Lemma}
\newtheorem{prop}[equation]{Proposition}
\theoremstyle{definition}
\newtheorem{definition}[equation]{Definition}
\theoremstyle{remark}
\newtheorem{remark}[equation]{Remark}
\newtheorem*{remark*}{Remark}
\newtheorem*{question*}{Question}
\newtheorem{conjecture}[equation]{Conjecture}
\newcounter{mtheorem}
\newtheoremstyle{mystyle}
  {}
  {}
  {\itshape}
  {}
  {\bfseries}
  {.}
  { }
  {}
\theoremstyle{mystyle}
\newtheorem{mtheorem}[mtheorem]{Theorem}
\newcommand{\ie}{\emph{i.e.} }
\newcommand{\eg}{\emph{e.g.} }
\newcommand{\cf}{\emph{cf.} }
\newcommand{\beq}{\begin{equation}}
\newcommand{\eeq}{\end{equation}}
\newcommand{\bea}{\begin{eqnarray}}
\newcommand{\eea}{\end{eqnarray}}
\newcommand{\C}{\mathbb{C}}
\newcommand{\R}{\mathbb{R}}
\newcommand{\Q}{\mathbb{Q}}
\newcommand{\Z}{\mathbb{Z}}
\newcommand{\HH}{\mathbb{H}}
\newcommand{\PP}{\mathbb{P}}
\newcommand{\ra}{\rightarrow}
\newcommand{\diag}{\operatorname{diag}}
\newcommand{\Ad}{\textrm{Ad}}
\newcommand{\ad}{\textrm{ad}}
\newcommand{\Lie}[1]{\mathfrak{#1}}
\newcommand{\tu}[1]{\textup{#1}}
\newcommand{\gtwo}{\ensuremath{\textup{G}_2}}
\newcommand{\unitary}[1]{\textup{U$(#1)$}}
\newcommand{\sunitary}[1]{\textup{SU$(#1)$}}
\newcommand{\sorth}[1]{\textup{SO$(#1)$}}
\newcommand{\triple}[1]{\underline{\boldsymbol{#1}}}
\newcommand{{\isomgtc}}{\ensuremath{\sunitary{2}^3 \rtimes S_3}}
\def\co{\colon\thinspace}
\begin{document}
\title{Deformations of hyperk\"ahler cones}
\author{Roger Bielawski
\and
Lorenzo Foscolo}

\address{Institut f\"ur Differentialgeometrie, Leibniz Universit\"at Hannover, Welfengarten 1, 30167 Hannover, Germany}
\address{Department of Mathematics, University College London, London WC1E 6BT, United Kingdom}

\begin{abstract}
We use twistor methods to promote Namikawa's universal Poisson deformations of conic affine symplectic singularities to families of hyperk\"ahler structures deforming hyperk\"ahler cone metrics. The metrics we produce are generally incomplete, but for specific classes of hyperk\"ahler cones, even these incomplete metrics have some interest and applications: we study in detail the case of the nilpotent cone of a simple complex Lie algebra, with applications to the definition of hyperk\"ahler quotients at arbitrary level of the moment map, and the case of Kleinian singularities.
\end{abstract}

\maketitle

\section{Introduction}


Construction and classification of hyperk\"ahler metrics have made great progress since Calabi \cite{Calabi} introduced the term ``hyperk\"ahler'' and gave the first nontrivial example (discovered almost simultaneously by Eguchi and Hanson \cite{Eguchi-Hanson}). Such metrics found numerous applications in various branches of mathematics and in theoretical physics. Their geometry is incredibly rich,  and can be studied using analytic, differential-geometric, or algebro-geometric methods. Not surprisingly, this also means that hyperk\"ahler metrics are very rigid. There are several instances where one can classify them without even  assuming the completeness.
\par
An instance of such a situation is one of the topics of the present paper:  $G$--invariant hyperk\"ahler metrics on open subsets of regular adjoint $G^\C$--orbits ($G$ is a compact  Lie group). In the special case where $G=\sunitary{2}$, these metrics are cohomogeneity one and explicit  \cite{Belinskii}. In the higher rank case, families of such metrics have been constructed by Kronheimer \cite{Kronheimer:Coadjoint,Kronheimer:Nilpotent}, and extended by Biquard and Kovalev \cite{Biquard:Nahm,Kovalev:Nahm}. These (and only these) metrics can be completed on the closure of the orbits. The existence of more general (incomplete) hyperk\"ahler metrics on open subsets of adjoint orbits has been conjectured  by Joyce \cite[\S 12]{Joyce:Hypercomplex}. D'Amorim Santa-Cruz \cite{D'Amorim:SantaCruz} has constructed these as {\em pseudo-hyperk\"ahler} manifolds, but was unable to prove that the metrics are positive-definite. As we discuss in \S\ref{arbitary;level},  knowing that these metrics are positive definite is of more than theoretical interest: it allows one to consider hyperk\"ahler quotients {\em at an arbitrary level}, similarly to symplectic quotients  $\mu^{-1}(l)/\tu{Stab}_G(l)$ at any level $l\in \Lie{g}^\ast$.
\par
D'Amorim Santa-Cruz constructed families of twistor spaces $Z_s$ parametrised by real sections $s$ of a certain holomorphic vector bundle over $\PP^1$.
As usual in such constructions, the hypercomplex manifold $M(s)$ of twistor lines in $Z_s$ may have several connected components, with different signatures of the metric.  We show here that this is indeed the case:
\begin{mtheorem}\label{thm:Santa:Cruz}
$M(s)$ always has a positive definite component and a negative definite one. However, for $G=\sunitary{3}$ and generic $s$, $M(s)$ also has a component with indefinite signature.  
\end{mtheorem}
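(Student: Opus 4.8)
The plan is to read off the metric signature on each connected component of $M(s)$ from a pointwise linear-algebra datum attached to a twistor line, and then to treat $G=\sunitary{2}$ and $G=\sunitary{3}$ separately. For a real twistor line $\ell\subset Z_s$ the normal bundle is $N_\ell\cong\mathcal{O}(1)^{\oplus 2n}$, where $2n=\dim_\C M(s)$, and the tangent space $T_\ell M(s)=H^0(\PP^1,N_\ell)$ inherits the $\mathcal{O}(2)$-valued fibre symplectic form $\omega$ of $Z_s$. Pairing $\omega$ against the real structure $\tau$ produces the metric at $\ell$; the pseudo-hyperk\"ahler structure forces its holonomy into $\Sp{p,q}$, so the signature is $(4p,4q)$ with $p+q=n$, and since the metric is nondegenerate wherever $M(s)$ is a manifold this signature is locally constant, hence constant on each component. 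This already isolates the dichotomy: for $G=\sunitary{2}$ one has $n=1$, so the only options are $(4,0)$ and $(0,4)$ and no indefinite component is possible, whereas for $G=\sunitary{3}$ one has $n=3$ and the signatures $(8,4)$, $(4,8)$ also become available.

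For the two definite components (any $G$) I would identify the positive-definite one with the standard Riemannian hyperk\"ahler metric on the orbit: the real lines built from the compact real form $\Lie{g}\subset\Lie{g}^\C$, equivalently from a positive $\tau$-compatible Hermitian structure on the eigenline data over the spectral curve, make the pairing manifestly positive, giving signature $(4n,0)$. The negative-definite component then arises from the same spectral data carrying the opposite-signed $\tau$-compatible Hermitian structure: this sign change is a discrete choice that preserves $\tau$-reality and the complex manifold $Z_s$ while negating $\omega$ on $N_\ell$, so it carries the $(4n,0)$ locus to a genuine $(0,4n)$ component inside the same $M(s)$. More generally this mirror symmetry shows that component signatures occur in reversed pairs $(4p,4q)\leftrightarrow(4q,4p)$, whence existence of the positive-definite component forces that of the negative-definite one.

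The crux is the indefinite component for $\sunitary{3}$. Here $\Lie{g}^\C=\Lie{sl}_3$, the real section is $s=(p_2,p_3)\in H^0(\mathcal{O}(4))\oplus H^0(\mathcal{O}(6))$ recording the two Casimirs $p_2=-\tfrac12\operatorname{tr}(A^2)$ and $p_3=-\det A$, and the spectral curve is $\{\eta^3+p_2(\zeta)\,\eta+p_3(\zeta)=0\}$ in the total space of $\mathcal{O}(2)\to\PP^1$. I would first enumerate the connected components of $M(s)$ by the discrete reality data of the eigenline bundle over this curve (the Weyl-type assignments of the three eigenvalues compatible with $\tau$), and then compute the pairing on a non-standard component. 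The signature is controlled by the interaction of the Killing form of $\Lie{sl}_3$, which is indefinite on $\Lie{sl}_3^\C$, with the reality of $\ell$: on the standard component the admissible deformations align with the compact real form and the pairing is definite, but on the exotic component they partly anti-align, producing two quaternionic blocks of one sign and one of the other, i.e.\ signature $(8,4)$. To establish this rigorously while avoiding a head-on evaluation, the cleanest route is a degeneration: pick a convenient generic real $s$, degenerate the spectral curve to a configuration where the three quaternionic eigenvalues can be read off and are seen to have mixed signs, and then invoke local constancy of the signature together with openness of the generic locus to propagate the $(8,4)$ component to all generic $s$.

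The main obstacle is precisely this last computation: pinning down the exotic component and proving its quaternion-Hermitian form is genuinely indefinite. The difficulty is that the pairing is not a single closed expression on the orbit but is delivered through the residue/Serre-duality pairing on the varying spectral curve, so controlling the signs demands either an explicit model of $N_\ell$ and $\omega$ for $\Lie{sl}_3$ or a well-chosen degeneration; verifying that the mixed-sign phenomenon is both nonempty and open — hence holds for generic $s$ — is the delicate point, and it is here that the feature $n=3>1$ of $\sunitary{3}$ is essentially used.
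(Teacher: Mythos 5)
Your proposal has a genuine gap in each half of the statement. For the definite components, you assert that the twistor lines ``built from the compact real form'' make the pairing ``manifestly positive''. But every real section of $\Lie{g}^\C\otimes\mathcal{O}(2)$ is encoded by a triple $(T_1,T_2,T_3)\in\Lie{g}\otimes\R^3$, so \emph{all} real twistor lines are built from the compact real form, and part of the content of the theorem is that some of them carry an indefinite metric; positivity is precisely what D'Amorim Santa-Cruz could not prove, so it cannot be read off pointwise from the choice of real form. The paper instead obtains $M(s)^+$ by deformation: the regular twistor lines of Kronheimer's hyperk\"ahler cone metric on the nilpotent cone (where positivity is known) are unobstructed, the implicit function theorem deforms them to lines of $Z_{t\cdot s}$ for small $t$, positivity persists by openness, and the $\C^\ast$--equivariance of Namikawa's universal Poisson deformation rescales $t\cdot s$ back to $s$. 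Your reflection $(T_1,T_2,T_3)\mapsto-(T_1,T_2,T_3)$ producing the negative-definite component does agree with the paper, but only once $M(s)^+$ has been shown to exist.

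For the indefinite component you correctly isolate the crux but do not carry it out: the spectral-curve degeneration is left as a plan, and you yourself flag that non-emptiness and openness of the mixed-signature locus is ``the delicate point''. The paper avoids any signature computation on the normal bundle. It shows, via the moment-map identity $J_1Y_{\mu_1}=J_2Y_{\mu_2}=J_3Y_{\mu_3}$ for the tri-Hamiltonian $G$--action, that $Y=J_1Y_{\mu_1(m)}$ is a nonzero null vector at any regular twistor line $m=(T_1,T_2,T_3)$ with $\langle T_1,[T_2,T_3]\rangle_{\Lie{g}}=0$. The problem then reduces to showing that the cubic hypersurface $D_2=\{\langle A_1,[A_0,A_2]\rangle=0\}$ is not contained in the non-regular locus $D_1=\{\det d_A\pi=0\}$ over the reals: this is done by proving $D_2$ is irreducible (its singular locus, the commuting triples, has codimension at least $3$), using real algebraic geometry to deduce that $D_2^\R\subseteq D_1^\R$ would force $D_2\subseteq D_1$, and exhibiting for $\Lie{sl}(3,\C)$ an explicit triple in $D_2\setminus D_1$ via the eigenspace decomposition under $\Ad\diag(1,\epsilon,\epsilon^2)$. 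If you wish to pursue your route you must actually produce the degeneration and control the residue pairing on the varying spectral curve; the paper's null-vector argument is considerably more economical and is what makes the genericity statement accessible.
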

We expect that the second statement holds for all simple Lie groups, apart from $\sunitary{2}$. We also prove another, rather unexpected property of $M(s)$: it has a positive-definite component $M(s)^+$, an end of which is asymptotic to the nilpotent cone in $\mathfrak{g}^\C$ with its (singular) Kronheimer's hyperk\"ahler metric. Thus in a sense $M(s)^+$ is only incomplete ``inside'', similarly to the $4$-dimensional $\sunitary{2}$--invariant hyperk\"ahler metrics of Belinsk\u{\i} et al. \cite{Belinskii} mentioned above.
\par
The proof of this result is in fact an application of a much more general twistor construction of hyperk\"ahler deformations of hyperk\"ahler cone metrics. Owing to a result of Donaldson and Sun \cite{Donaldson:Sun}, any {\em complete} hyperk\"ahler manifold $(M,g)$ with Euclidean volume growth has a unique tangent cone at infinity which is a normal affine variety $X$ with a $\C^\ast$--action with positive weights and carrying a hyperk\"ahler cone metric on its regular part. Thus on open subsets of its end, $(M,g)$ can be regarded as a hyperk\"ahler deformation of the cone metric on the regular part of $X$. Since such a cone $X$ can have non-isolated singularities, the construction of complete hyperk\"ahler metrics with $X$ as tangent cone at infinity via analytic methods is challenging. In fact, apart from Joyce's work \cite[Chapter 9]{Joyce:QALE} on QALE hyperk\"ahler metrics on crepant resolutions of quotient singularities $\HH^n/\Gamma$ for a finite group $\Gamma\subset \tu{Sp}(n)$, no general analytic construction is available and all other known examples arise as hyperk\"ahler quotients in finite (\eg hypertoric varieties \cite{Bielawski:Dancer}) or infinite (\eg moduli spaces of solutions to Nahm's equations \cite{Kronheimer:Nilpotent,Biquard:Nahm,Kovalev:Nahm}) dimensions.
\par
Here we use twistor methods and deformation theory to construct large families of hyperk\"ahler deformations of a hyperk\"ahler  cone. The main tool is Namikawa's universal graded Poisson deformation $\pi\co\Lie{X}\ra T_X$ of the conic affine symplectic variety $X$ \cite{Namikawa:Poisson:defs:affine}. We adapt the ideas of Hitchin \cite{Hitchin:Polygons:Gravitons}  and of D'Amorim Santa-Cruz \cite{D'Amorim:SantaCruz} to construct twistor spaces from $\pi\co\Lie{X}\ra T_X$, provided that $X$ is a hyperk\"ahler cone, and use deformation theory to also construct families of twistor lines:
\begin{mtheorem}\label{thm:QAC}
For every conic affine symplectic variety $X$ carrying a compatible hyperk\"ahler cone metric on its smooth part $X_{reg}$, there exists a family $\{(M_X(s)^+,g_s)\}$ of hyperk\"ahler metrics parametrised by real holomorphic sections of a certain vector bundle $T_X(2)\ra\PP^1$ and asymptotic to the cone metric on $X_{reg}$.
\end{mtheorem}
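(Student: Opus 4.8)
The plan is to realise each $M_X(s)^+$ as a component of the space of real twistor lines in a twistor space $Z_s\ra\PP^1$ assembled from Namikawa's universal Poisson deformation \cite{Namikawa:Poisson:defs:affine}, and then to recover the metric $g_s$ via the inverse twistor construction for (pseudo-)hyperk\"ahler manifolds, as used by Hitchin and by D'Amorim Santa-Cruz \cite{Hitchin:Polygons:Gravitons, D'Amorim:SantaCruz}. First I would fix homogeneity conventions: the conic structure gives a $\C^\ast$-action on $X$ with positive weights under which the holomorphic symplectic form has weight $2$, and by \cite{Namikawa:Poisson:defs:affine} the deformation $\pi\co\Lie{X}\ra T_X$ is $\C^\ast$-equivariant for an induced positive grading on the affine base $T_X$. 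The bundle $T_X(2)\ra\PP^1$ of the statement is then obtained by regarding the graded space $T_X$ as a $\C^\ast$-space and twisting by $\mathcal{O}(2)$, so that its real holomorphic sections $s$ are exactly the weight-$2$ deformation data compatible with an antipodal real structure. Given such an $s$, covering $\PP^1=U_0\cup U_\infty$ and pulling $\pi$ back along $s|_{U_0}$ and $s|_{U_\infty}$, I glue the two pieces over the overlap using the $\C^\ast$-action to obtain $p\co Z_s\ra\PP^1$ with fibre $\pi^{-1}(s(\zeta))$ over $\zeta$. The fibrewise symplectic forms of the Poisson deformation assemble into a section of $\Lambda^2 T^\ast_{Z_s/\PP^1}\otimes p^\ast\mathcal{O}(2)$, and the quaternionic structure of the cone metric on $X_{reg}$ furnishes the antiholomorphic involution covering $\zeta\mapsto-1/\bar\zeta$; this equips $Z_s$ with all the (a priori singular) twistor data.

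Next I would produce the twistor lines. For $s=0$ the space $Z_0$ is the honest twistor space of the hyperk\"ahler cone metric on $X_{reg}$, whose real sections are parametrised by $X_{reg}$ and have normal bundle $\mathcal{O}(1)^{2n}$ (writing $\dim_\C X=2n$). For general $s$ I regard $Z_s$ as a deformation of $Z_0$ supported in the base directions and argue that, over the part of $X_{reg}$ far from the vertex, the cone lines persist: since $s$ has bounded weighted size while the cone sections scale to infinity along the $\C^\ast$-orbit, the defining equations are a small perturbation and Kodaira's stability theorem, via $H^1(\PP^1,\mathcal{O}(1)^{2n})=0$, provides for each sufficiently outlying cone line a unique nearby real section of $Z_s$ with unchanged splitting type $\mathcal{O}(1)^{2n}$. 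The $\C^\ast$-equivariance of the whole picture then propagates these sections, so that the perturbed lines sweep out a full $4n$-real-dimensional family forming an end.

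With such a family of real sections of splitting type $\mathcal{O}(1)^{2n}$ and the $p^\ast\mathcal{O}(2)$-valued symplectic form in hand, the inverse twistor construction equips the parameter space with a (pseudo-)hyperk\"ahler metric, the relative symplectic form inducing the holomorphic symplectic forms of the hyperk\"ahler structure. Near the end this metric is a small perturbation of the cone metric and hence positive definite there; I define $M_X(s)^+$ to be the connected component on which $g_s$ is positive definite. For the asymptotic statement I would compare the deformed sections with the cone sections: their difference is governed by $s$ and decays relative to the cone metric as one moves out along the $\C^\ast$-orbit, yielding convergence of $g_s$ to the cone metric on $X_{reg}$ at a controlled weighted rate. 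This simultaneously explains why the metrics are generally incomplete only ``inside'' and asymptotically conical at the end.

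The step I expect to be the crux is the normal-bundle control of the second paragraph in the presence of the singularities of $X$: the cone sections meet the singular locus of $Z_s$, so naive deformation theory breaks down, and one must use that a generic Poisson fibre $\pi^{-1}(s(\zeta))$ smooths the variety and separates the lines from the singular set, while the graded universality of $\pi$ keeps the splitting type rigid. Making this uniform simultaneously in $\zeta\in\PP^1$ and along the cone --- so that the perturbed sections genuinely fill out an end on which $g_s$ is positive definite and asymptotically conical --- is where the main technical effort, and the essential use of \cite{Namikawa:Poisson:defs:affine}, will lie.
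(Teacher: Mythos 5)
Your proposal follows essentially the same route as the paper: twist Namikawa's universal graded Poisson deformation by $\mathcal{O}(2)^\times$ to obtain $Z_s\ra\PP^1$ with its $\mathcal{O}(2)$-valued fibrewise symplectic form and compatible real structure, deform the cone's twistor lines using vanishing of $H^1$ of the $\mathcal{O}(1)^{2n}$ normal bundle together with surjectivity onto $H^0(\PP^1;T_X(2))$, and exploit $\C^\ast$-equivariance (the scaling identity $g_{t\cdot s}=t\,g_s$) to get positive-definiteness and the asymptotically conical behaviour --- your ``go far out along the cone so the perturbation is small'' argument is exactly the paper's ``fix a cone line and scale $s$ down'' argument read through that equivariance. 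Two minor points: your homogeneity normalisation is off (if $\omega^c$ has weight $2$ one must twist by $\mathcal{O}(1)^\times$, not $\mathcal{O}(2)^\times$, to obtain an $\mathcal{O}(2)$-valued form; the paper's default convention is weight $1$ with an $\mathcal{O}(2)$ twist), and the difficulty you flag about cone sections meeting the singular locus does not in fact arise, since Lemma \ref{lem:Regular:Points:Central:Fibre} (local triviality of the Poisson deformation at points of $X_{reg}$, via Darboux coordinates and smoothness of $T_X$) shows that the lines over $X_{reg}$ lie entirely in the locus where $\pi$ is a submersion, so the standard deformation theory you invoke applies directly.
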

Generic $\{(M_X(s)^+,g_s)\}$ will be incomplete. If $X$ admits a symplectic resolution $\widetilde{X}$ (equivalently, if the generic fibre of $\pi\co \Lie{X}\ra T_X$ is smooth), the space of real sections of $T_X(2)\ra \PP^1$ contains a distinguished subset identified with a finite quotient of $H^2(\widetilde{X};\R)\otimes\R^3$. We conjecture that if $s$ is a generic element of this subset, then the completion of $(M_X(s)^+,g_s)$ yields a complete hyperk\"ahler metric on $\widetilde{X}$. Moreover, it is natural to wonder whether all complete hyperk\"ahler metrics with tangent cone at infinity $X$ must arise in this way.
  
The proof of Theorem \ref{thm:Santa:Cruz} is an application of Theorem \ref{thm:QAC} to the case where $X$ is the nilpotent cone of the complex Lie algebra $\Lie{g}^\C$. We also consider the application of Theorem \ref{thm:QAC} to the case where $X$ is a  Kleinian singularity $X=\C^2/\Gamma$ for some $\Gamma\subset \sunitary{2}$. We suggest that the incomplete ALE metrics on exterior domains of $X$ we produce in addition to Kronheimer's ALE metrics on the minimal resolution of $X$ \cite{Kronheimer:ALE:Construction,Kronheimer:ALE:Classification} play a role in the conjectural description by Pantev--Wijnholt \cite{Pantev:Wijnholt} of the local deformations of $\gtwo$--holonomy metrics with codimension-$4$ singularities in terms of $3$-dimensional gauge theory. 

\subsection*{Acknowledgments} RB is a member of the DFG Priority Programme 2026 ``Geometry at infinity''. LF is supported by a Royal Society University Research Fellowship. The authors would like to thank Travis Schedler for suggesting the proof of Lemma \ref{lem:Regular:Points:Central:Fibre}. The authors learned about \cite{Diaconescu:Donagi:Pantev}, which inspired Section \ref{sec:3dHitchin}, at the workshop ``Special Holonomy and Branes'' held at AIM in Autumn 2020: LF thanks AIM and the workshop organisers for the productive week and many of the workshop participants for discussions related to Section \ref{sec:3dHitchin}.

\section{Twistor construction of deformations of hyperk\"ahler cones}

In this section we use twistor methods to promote Namikawa's theory of universal Poisson deformations of affine symplectic singularities to a general construction of hyperk\"ahler metrics that can be regarded as deformations of hyperk\"ahler cones.

\subsection{Universal Poisson deformations of hyperk\"ahler cones}

We introduce the working definition of hyperk\"ahler cones we use in the paper, then review and refine Namikawa's construction of universal Poisson deformations of affine symplectic varieties with a good $\C^\ast$--action under the assumption that the regular part of the affine symplectic variety arises from a conical hyperk\"ahler structure.  

\subsubsection{Hyperk\"ahler cones}
For a thorough overview of 3-Sasakian geometry we refer the reader to \cite[Chapter 13]{Boyer:Galicki:Book}; here we recall only the basic properties we need.

A $3$-Sasaki manifold of dimension $4n-1\geq 3$ is a triple $(\Sigma,g_\Sigma,\triple{\eta})$, where $(\Sigma,g_\Sigma)$ is a smooth connected $(4n-1)$-dimensional Riemannian manifold and $\triple{\eta}=(\eta_1,\eta_2,\eta_3)$ is a hypercontact structure on $\Sigma$ satisfying the following condition: $\tu{C}(\Sigma)=\R_+ \times\Sigma$ endowed with the triple of closed $2$-forms $\triple{\omega}_{\tu{C}}=(\omega_1,\omega_2,\omega_3)$, $\omega_i = d\left( \tfrac{1}{2}r^2 \eta_i\right)$ for $i=1,2,3$, is a hyperk\"ahler manifold with induced cone metric $g_\tu{C}=dr^2 + r^2 g_\Sigma$. Since hyperk\"ahler metrics are Ricci-flat, $g_\Sigma$ is necessarily an Einstein metric with positive scalar curvature $(4n-1)(4n-2)$. Hence complete $3$-Sasaki manifolds are compact and have finite fundamental group.

The tangent bundle of $\Sigma$ has a $g_\Sigma$--orthogonal decomposition $T\Sigma = \R \xi_1 \oplus \R\xi_2 \oplus \R \xi_3 \oplus \mathcal{H}$, where $\xi_i$ is the dual vector field to $\eta_i$ and $\mathcal{H}=\ker{(\eta_1,\eta_2,\eta_3)}$. The vector fields $\xi_1,\xi_2,\xi_3$ generate an infinitesimal isometric action of $\Lie{su}(2)$ that acts as infinitesimal rotations of the triple $\triple{\eta}$. If $\Sigma$ is complete this infinitesimal action integrates to an action of $\sunitary{2}$ with the same properties. The group acting effectively is either $\sunitary{2}$ or $\sorth{3}$ \cite[Proposition 13.3.11]{Boyer:Galicki:Book}. In order to fix ideas, in the rest of the section we will assume the latter case occurs. For example, if $\Sigma$ is the Konishi bundle over a smooth quaternionic K\"ahler manifold then the group acting effectively is always $\sorth{3}$ unless $(\Sigma,g_\Sigma)$ is the round $(4n-1)$-sphere \cite[Theorem 6.3]{Salamon:qK}.

Every compact  $3$-dimensional $3$-Sasaki manifold is of the form $S^3/\Gamma$ for some finite subgroup $\Gamma$ of $\sunitary{2}$. In higher dimensions infinitely many compact $3$-Sasaki manifolds are known \cite{Boyer:Galicki:3Sasaki:7d}, but there also many interesting incomplete examples giving rise to hyperk\"ahler cones with non-isolated singularities. In this note we use complex geometry to provide a unified framework to work with such ``singular'' hyperk\"ahler cones using twistor methods. As a justification for our approach, the next proposition describes the underlying complex geometric structure of a hyperk\"ahler cone $\tu{C}(\Sigma)$ over a compact $3$-Sasaki manifold $\Sigma$.     

\begin{prop}\label{prop:Smooth:HK:Cones}
Let $\tu{C}(\Sigma)$ be the hyperk\"ahler cone over a compact $3$-Sasaki manifold $\Sigma$. Fix a complex structure compatible with the hyperk\"ahler metric and denote by $\omega^c$ the corresponding holomorphic symplectic form. Then the complex structure on $\tu{C}(\Sigma)$ induces on $X=\tu{C}(\Sigma)\cup \{ o\}$, where $o$ denotes the vertex of the cone, the structure of a complex analytic space such that $(X,\omega^c)$ is an affine symplectic variety with a $\C^\ast$--action acting with positive weights on the coordinate ring of $X$ and with weight $l=1$ on $\omega^c$ and fixing only $o\in X$. Moreover, $X$ admits an antiholomorphic involution $j\co X\ra X$ such that $j^\ast\omega^c = \overline{\omega^c}$.
\proof
Denote by $J_i$, $i=1,2,3$, the complex structure on $\tu{C}(\Sigma)$ corresponding to the K\"ahler form $\omega_i$. Up to a hyperk\"ahler rotation we can assume that the distinguished complex structure in the statement is $J_1$. The corresponding holomorphic symplectic form is $\omega^c = \omega_2 + i\omega_3$.

The vector field $\xi_1$ on $\Sigma$ extended radially to $\tu{C}(\Sigma)$ satisfies $\mathcal{L}_{\xi_1}\omega^c = 2i \omega^c$ and $J_1\xi_1 = - r\partial_r$. Since we assume that only $\sorth{3}$ acts effectively, $\tfrac{1}{2}\xi_1$ has period $2\pi$ and generates a $\C^\ast$--action on $\tu{C}(\Sigma)$ without fixed points acting with weight $l=1$ on $\omega^c$ as claimed. Identifying the double cover $\sunitary{2}$ of $\sorth{3}$ with the sphere of unit quaternions, the action of $j$ on $\tu{C}(\Sigma)$ yields an anti-holomorphic involution, still denoted by $j$, satisfying $j^\ast\omega^c =\overline{\omega^c}$.

The top power of the holomorphic symplectic form defines a holomorphic complex volume form on the K\"ahler cone $(\tu{C}(\Sigma),J_1,\omega_1)$. By \cite[Theorem 3.1 and Proposition 3.5]{vanCoevering:Examples} $X$ is then a normal affine variety and $o$ is a rational Gorenstein singularity. The $\C^\ast$--action on $\tu{C}(\Sigma)$ and real structure $j$ extend to $X$ fixing the singular point $o$. Given the existence of the holomorphic symplectic form $\omega^c$ on the smooth part $\tu{C}(\Sigma)$ of $X$, by \cite[Theorem 6]{Namikawa:Extension:2forms} $X$ is an affine symplectic variety in the sense of \cite{Beauville:Symplectic:Singularities}, \ie $\omega^c$ extends to a (not necessarily non-degenerate) holomorphic $2$-form on any resolution $p\co X'\ra X$.
\endproof  	
\end{prop}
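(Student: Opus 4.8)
The plan is to first extract the holomorphic vector fields encoding the $\C^\ast$-action, then to promote the complex-analytic cone to an affine variety using the Calabi--Yau condition, and finally to produce the real structure from the residual symmetry of the 3-Sasaki package. After a hyperkähler rotation I would take the distinguished complex structure to be $J_1$, so that $\omega^c = \omega_2 + i\omega_3$. The Euler field $r\partial_r$ of the cone and the Reeb field $\xi_1$ are linked by $J_1\xi_1 = -r\partial_r$ (equivalently $J_1(r\partial_r)=\xi_1$), so $r\partial_r - i\xi_1$ is of type $(1,0)$ and thus holomorphic. Since each $\omega_i = d(\tfrac12 r^2\eta_i)$ is homogeneous of degree two under dilations, $\mathcal{L}_{r\partial_r}\omega^c = 2\omega^c$, while the fact that $\xi_1$ rotates the triple $\triple{\eta}$ gives $\mathcal{L}_{\xi_1}\omega^c = 2i\omega^c$. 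Under the standing assumption that only $\sorth{3}$ acts effectively, $\tfrac12\xi_1$ is $2\pi$-periodic, so $r\partial_r$ and $\xi_1$ integrate to a fixed-point-free $\C^\ast = \R_+\times S^1$ action; reading the $S^1$-weight off $\mathcal{L}_{\frac12\xi_1}\omega^c = i\omega^c$ shows $\omega^c$ has weight $l=1$, and positivity of the $r\partial_r$-grading gives positive weights on functions.

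Next I would upgrade the analytic cone to an affine variety. The top power $(\omega^c)^n$ is a nowhere-vanishing holomorphic volume form, so $(\tu{C}(\Sigma),J_1,\omega_1)$ is a Ricci-flat (Calabi--Yau) Kähler cone; van Coevering's structure theorem \cite[Theorem 3.1 and Proposition 3.5]{vanCoevering:Examples} then identifies $X = \tu{C}(\Sigma)\cup\{o\}$ with a normal affine variety whose coordinate ring is finitely generated and graded with positive weights by the $\C^\ast$-action, and whose vertex $o$ is a rational Gorenstein singularity. With $X$ normal and $o$ rational, the symplectic form $\omega^c$ on $X_{reg}=\tu{C}(\Sigma)$ extends to a holomorphic $2$-form on any resolution by Namikawa's extension result \cite[Theorem 6]{Namikawa:Extension:2forms}, which is exactly the assertion that $(X,\omega^c)$ is an affine symplectic variety in the sense of \cite{Beauville:Symplectic:Singularities}.

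For the real structure I would exploit the $\sorth{3}$ rotating $\triple{\eta}$. Its order-two element given by rotation through $\pi$ about the $\xi_2$-axis, realized by the unit quaternion $j$, acts on $\tu{C}(\Sigma)$ as an isometry inducing $(J_1,J_2,J_3)\mapsto(-J_1,J_2,-J_3)$, hence $j^\ast\omega_1 = -\omega_1$, $j^\ast\omega_2 = \omega_2$, $j^\ast\omega_3 = -\omega_3$. Being an isometry that reverses $\omega_1$, it anticommutes with $J_1$ and is therefore antiholomorphic for the distinguished complex structure, with $j^\ast\omega^c = \omega_2 - i\omega_3 = \overline{\omega^c}$; because only $\sorth{3}$ acts effectively, $j$ squares to the identity and is a genuine involution. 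Finally the $\C^\ast$-action and $j$ extend continuously across the vertex fixing $o$, and by normality of $X$ these extensions are automatically morphisms of the analytic space.

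I expect the real difficulty to lie not in the differential-geometric identities, which are routine for Kähler cones, but in the passage from the analytic cone to a \emph{normal affine algebraic} variety with finitely generated, positively graded coordinate ring: this is where compactness of $\Sigma$ and the Calabi--Yau condition are genuinely used, and it is supplied by van Coevering's theorem. Once algebraicity and normality are in hand, the affine-symplectic-variety conclusion is a formal consequence of Namikawa's extension theorem.
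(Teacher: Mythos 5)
Your proposal is correct and follows essentially the same route as the paper: hyperk\"ahler rotation to $J_1$, the Reeb/Euler vector field identities giving the fixed-point-free $\C^\ast$--action with weight $l=1$ on $\omega^c$, van Coevering's theorem for normality and the rational Gorenstein vertex, Namikawa's extension theorem for the affine symplectic structure, and the unit quaternion $j$ for the antiholomorphic involution. The only difference is that you spell out details the paper leaves implicit (holomorphicity of $r\partial_r - i\xi_1$ and the explicit action of $j$ on the K\"ahler triple), which is harmless.
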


\begin{remark*}
If $\sunitary{2}$ instead of $\sorth{3}$ acts effectively on $\Sigma$, then we would have $l=2$ instead of $l=1$ in the statement of the Proposition. 	
\end{remark*}

\begin{remark}\label{rmk:Good:Cast:action}
A $\C^\ast$--action on a pointed affine symplectic variety $(X,o,\omega^c)$ with the properties in the statement of the Proposition, \ie satisfying \cite[Condition $({}^\ast)$]{Namikawa:Poisson:defs:affine}, will be referred to as a good $\C^\ast$--action on $(X,o,\omega^c)$.
\end{remark}

We now turn the conclusion of Proposition \ref{prop:Smooth:HK:Cones} into a working definition of a possibly singular hyperk\"ahler cone as a global object. 

\begin{definition}\label{def:Singular:HK:Cones}
A \emph{conical hyperk\"ahler variety} $(X,o,\omega^c,t,j)$ is a normal affine variety $X$ with a distinguished point $o\in X$ endowed with
\begin{enumerate}
\item a holomorphic symplectic form $\omega^c$ on the smooth part $X_{reg}$ of $X$ which makes $(X,\omega^c)$ into an affine symplectic variety in the sense of \cite{Beauville:Symplectic:Singularities};
\item a good $\C^\ast$--action $\C^\ast \stackrel{t}{\ra}\tu{Aut}(X)$ on $(X,o,\omega^c)$ in the sense of Remark \ref{rmk:Good:Cast:action};
\item an antiholomorphic involution $j\co X\ra X$ such that $j^\ast\omega^c=\overline{\omega^c}$ and $j\circ t = \overline{t}\circ j$ for all $t\in \C^\ast$ (here by abuse of notation for every $t\in \C^\ast$ we denote by the same symbol $t$ the corresponding automorphism of $X$);
\item the smooth locus $X_{reg}$ of $X$ carries a conical hyperk\"ahler structure $(g_{\tu{C}},\omega_1,\omega_2,\omega_3)$ such that $\omega^c = \omega_2 + i\omega_3$, the complex structure $J_1$ corresponding to $\omega_1$ is induced from the complex structure on $X_{reg}$ and the $\C^\ast$--action $t$ is generated by the Reeb vector field $\tfrac{1}{2}\xi_1$.  	
\end{enumerate}
\end{definition}

\begin{remark*}
In the variant of the definition where the weight of the $\C^\ast$--action on the complex symplectic form is $l=2$ instead of $l=1$, the $\C^\ast$--action is generated by the vector field $\xi_1$ on $X_{reg}$. 
\end{remark*}

\begin{remark}\label{rmk:Singular:HK:Cones}
In \cite{Donaldson:Sun} Donaldson--Sun have shown that non-collapsed tangent cones (at singular points as well as tangent cones at infinity in the non-compact case) of Gromov--Hausdorff limits of K\"ahler Einstein metrics are unique and have the structure of normal affine varieties. In the hyperk\"ahler case it seems likely that the additional assumptions of Definition \ref{def:Singular:HK:Cones} will be automatically satisfied.    
\end{remark}

\subsubsection{Universal Poisson deformations}

We want to study hyperk\"ahler deformations of hyperk\"ahler cones. Definition \ref{def:Singular:HK:Cones} allows us to make sense of these at least at the level of the underlying complex analytic data. Indeed, drop for a moment the real structure $j$ and parts (iii) and (iv) in Definition \ref{def:Singular:HK:Cones} and focus on the affine symplectic variety $(X,o,\omega^c,t)$ with a good $\C^\ast$--action. One can then introduce the notion of a graded Poisson deformation $\pi\co \Lie{X}\ra T$ of $(X,o,\omega^c,t)$: a holomorphic Poisson variety $\Lie{X}$ with a $\C^\ast$--action preserving the Poisson structure, a flat morphism $\pi\co \Lie{X}\ra T$ which is $\C^\ast$--equivariant with respect to a $\C^\ast$--action on the affine complex space $T$ with a unique fixed point $0\in T$, and an isomorphism between $\pi^{-1}(0)$ and $X$ endowed with its $\C^\ast$--action and the Poisson structure induced by the holomorphic symplectic form $\omega^c$. 

In \cite{Namikawa:Poisson:defs:affine} Namikawa constructs a \emph{universal} graded Poisson deformation of $(X,o,\omega^c,t)$, \ie a graded flat Poisson deformation $\pi\co \Lie{X}\ra T_X$ such that any other such deformation $\pi'\co \Lie{X}'\ra T'$ of $(X,o,\omega^c,t)$ is induced by a unique morphism $T'\ra T_X$.    

\begin{theorem}[Namikawa, \cite{Namikawa:Poisson:defs:affine}]\label{thm:Namikawa:Poisson:def}
Let $(X,o,\omega^c,t)$ be an affine symplectic variety with a good $\C^\ast$--action. Then there exists a universal graded Poisson deformation $\pi\co \Lie{X}\ra T_X$ of $(X,o,\omega^c,t)$ with $T_X\simeq \C^r$.	
\end{theorem}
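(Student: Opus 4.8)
The plan is to follow Namikawa's strategy: first construct a formal versal graded Poisson deformation by standard deformation theory, then algebraise it using the good $\C^\ast$--action and upgrade versality to universality. To begin, I would introduce the graded Poisson deformation functor $PD_X$ on the category of graded Artinian local $\C$--algebras, sending $A$ to the set of isomorphism classes of flat $\C^\ast$--equivariant Poisson deformations of $(X,o,\omega^c,t)$ over $\operatorname{Spec} A$ whose central fibre is $X$ with its given structure. Its tangent space $PD_X(\C[\epsilon]/(\epsilon^2))$ is computed by the second Poisson cohomology $HP^2(X)$, and obstructions live in $HP^3(X)$. Verifying Schlessinger's conditions (the gluing of infinitesimal deformations) then produces a formal hull $\widehat{\pi}\co\widehat{\Lie{X}}\ra\operatorname{Spf} R$ with tangent space $HP^2(X)$.

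A preliminary point is finite-dimensionality: the positivity of the weights of $t$ forces each graded piece of $HP^i(X)$ to be finite-dimensional, so $\dim HP^2(X)=r<\infty$. In fact, following Namikawa, one can identify this number with $r=\dim_\C H^2(Y;\C)$ for a $\Q$--factorial terminalisation $Y\ra X$ (a symplectic resolution when one exists), which also explains the distinguished subspace appearing later in the introduction.

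The crux of the argument, and the step I expect to be the main obstacle, is to prove that $PD_X$ is \emph{unobstructed}, so that $R\cong\C[[t_1,\dots,t_r]]$ is smooth of dimension $r$. For this I would invoke the $T^1$--lifting principle: it suffices to show that for every small extension $0\ra(\epsilon)\ra A'\ra A\ra 0$, every Poisson deformation over $A$ lifts to $A'$, \ie that the obstruction class in $HP^3(X)\otimes(\epsilon)$ vanishes. This is precisely where the symplectic structure is decisive: following Kaledin--Verbitsky and Namikawa, the form $\omega^c$ identifies the relevant deformation complex with a de Rham--type complex on $X_{reg}$ and reduces the lifting to a Hodge-theoretic degeneration (formality) statement, which holds because an affine symplectic variety has rational Gorenstein singularities. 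The delicate points are to run this argument $\C^\ast$--equivariantly and to control the singular locus, using normality of $X$ and the affine symplectic hypothesis.

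Finally I would algebraise the formal family and pass from versality to universality. Because $t$ acts with positive weights, the universal deformation is graded with the parameters $t_1,\dots,t_r$ sitting in positive degree; the contracting $\C^\ast$--action then forces the formal family to converge, so that $\operatorname{Spf} R$ algebraises to $T_X\cong\C^r$ and $\widehat{\Lie{X}}$ to a genuine flat $\C^\ast$--equivariant Poisson morphism $\pi\co\Lie{X}\ra T_X$ with $\pi^{-1}(0)\cong X$. Universality, rather than mere versality, follows from the same grading: the positive weights on the base eliminate the infinitesimal automorphisms that would otherwise obstruct uniqueness, so that Schlessinger's condition (H4) holds and the classifying morphism $T'\ra T_X$ induced by any other graded Poisson deformation $\pi'\co\Lie{X}'\ra T'$ is unique, as required.
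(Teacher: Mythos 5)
The paper offers no proof of this statement: it is imported verbatim from Namikawa \cite{Namikawa:Poisson:defs:affine} and used as a black box, so there is no internal argument to compare yours against. Judged instead against Namikawa's actual proof, your outline has the right overall architecture (graded Poisson deformation functor, Schlessinger hull, finite-dimensionality of the tangent space from the positive weights, unobstructedness, algebraisation and universality via the contracting $\C^\ast$--action, and the identification $r=\dim_\C H^2(Y;\C)$ for a $\Q$--factorial terminalisation $Y$), but it is vague precisely at the step you yourself flag as the crux, and what you propose there is not how the theorem is actually proved.

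Namikawa does not establish unobstructedness by running a $T^1$--lifting/formality argument directly on the singular affine variety $X$; the Poisson cohomology of $X$ itself and the ``Hodge-theoretic degeneration statement'' you invoke are exactly the objects one does not have good control over in this generality, so taken literally this is where your argument would break down. The actual route is to pass to a $\Q$--factorial terminalisation $Y\ra X$, prove unobstructedness of $PD_Y$ (where Poisson deformations are locally trivial, controlled by $H^2(Y;\C)$, and the $T^1$--lifting principle together with a period-map argument applies because the singular locus of $Y$ has codimension at least four), and then show that the induced map of prorepresenting hulls $\operatorname{Spf}R_Y\ra\operatorname{Spf}R_X$ is a finite Galois cover whose group $W$ acts on $H^2(Y;\C)$ as a reflection group, so that $R_X\simeq R_Y^W$ is again a polynomial ring of the same dimension $r$. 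This detour through $Y$ is the essential idea missing from your sketch. Likewise, the assertion that ``positive weights on the base eliminate infinitesimal automorphisms so (H4) holds'' is a slogan rather than an argument; the passage from versality to universality requires the equivariant prorepresentability analysis carried out in \cite[\S 5]{Namikawa:Poisson:defs:affine}. As a roadmap to the literature your proposal is serviceable; as a proof it has a genuine gap at its central step.
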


\begin{remark*}
Namikawa works in the algebraic category (or even with formal schemes when there is no $\C^\ast$--action): here we work with the induced complex analytic spaces.	
\end{remark*}

We make the following immediate observations about Namikawa's universal Poisson deformation $\pi\co \Lie{X}\ra T_X$. Without loss of generality we can assume that the $\C^\ast$--action on the base $T_X\simeq \C^r$ is diagonal and denote by $d_1,\dots, d_r\in \Z$ its weights; since the $\C^\ast$--action on $T_X$ has positive weights \cite[\S 5.4]{Namikawa:Poisson:defs:affine}, we have $d_i \geq 1$ for all $i$. The weight of the $\C^\ast$--action on the Poisson structure of $\Lie{X}$ must be $-l=-1$ since this is the case for $X$. Finally, since the Poisson structure on $X$ is non-degenerate, the Poisson structure on every fibre of $\pi$ is also non-degenerate (this is true for nearby fibres by openness and is then extended to any fibre using the $\C^\ast$--action) and therefore it is induced by a family of fibre-wise holomorphic symplectic forms, which by abuse of notation we still denote by $\omega^c$.

For our applications it is crucial to bring back into the discussion the real structure $j$ in part (iii) of Definition \ref{def:Singular:HK:Cones}.
Since we do not know whether the universal deformation must be reduced, we define first real structures for general complex spaces. Locally, such a complex space $(Y,\mathcal{O}_Y)$ is isomorphic to a model complex space $(V,\mathcal{O}_V)$, where $V$ is the common zero set of finitely many holomorphic functions $f_1,\dots,f_k$ in a domain $D$ of $\C^n$, and $\mathcal{O}_V$ is the quotient of $\mathcal{O}_D$ by the ideal sheaf generated by these functions. Such a model space has a well-defined conjugate space  $(V,\overline{\mathcal{O}_V})$, where the sheaf $\overline{\mathcal{O}_V}$  is the quotient of $\overline{\mathcal{O}_D}$ (antiholomorphic functions) by the ideal sheaf generated by $\overline{f_1},\dots,\overline{f_k}$. If $\mathcal{O}_V$ has a Poisson structure, conjugation morphisms between $\mathcal{O}_V$ and $\overline{\mathcal{O}_V}$ allow to define a conjugate Poisson structure on the latter. A (Poisson) isomorphism between two local model spaces is also a (Poisson) isomorphism between their conjugates, and hence we obtain the conjugate (Poisson) complex space 
$(Y,\overline{\mathcal{O}_Y})$ by using the same gluing maps as for $(Y,\mathcal{O}_Y)$, but applied to the conjugate model spaces. 
Now a {\em real structure} $\sigma$ on a (Poisson) complex space $(Y, \mathcal{O}_Y)$ is a (Poisson) involution $(\sigma,\tilde\sigma):(Y, \mathcal{O}_Y)\to (Y, \overline{\mathcal{O}_Y})$.

\begin{lemma}\label{lem:Namikawa:real}
Let $(X,o,\omega^c,t,j)$ be a conical hyperk\"ahler variety and denote by $\pi\co \Lie{X}\ra T_X$ the universal graded Poisson deformation of $(X,o,\omega^c,t)$ given in Theorem \ref{thm:Namikawa:Poisson:def}. Then $\Lie{X}$ and $T_X$ have real structures $\sigma$ and $\tau$ compatible with the $\C^\ast$--actions and such that $\pi\circ\sigma = \tau\circ\sigma$.
\proof
The proof follows \cite[Lemma 2.1]{Donaldson:Friedman} and uses the universality of Namikawa's Poisson deformation of $X$. For reasons of brevity we denote a conjugate complex space $(Y,\overline{\mathcal{O}_Y})$  by $\overline{Y}$. If $\pi\co \Lie{X}\ra T_X$ of $(X,o,\omega^c,t)$ is the universal graded Poisson deformation of $X$, then $\overline{\pi}\co \overline{\Lie{X}}\ra \overline{T_X}$ is a graded Poisson deformation of $(\overline{X},o,\overline{\omega^c},\overline{t})$, which, using the real structure $j$ of $X$, we can identify with a graded Poisson deformation of $(X,o,\omega^c,t)$ itself. Therefore, owing to the (uni)versality of $\pi\co \Lie{X}\ra T_X$, there exists a commutative diagram
\[
\begin{CD}
\overline{\Lie{X}} @>\sigma >> \Lie{X}\\
@V\overline{\pi}VV  @VV\pi V\\
\overline{T_X} @> \tau >> T_X
\end{CD}
\] 
compatible with the $\C^\ast$--actions and the Poisson structures such that the morphism $\sigma$ restricts to $j$ on the central fibre. Iterating $\sigma$ and using the universal property of $\pi\co \Lie{X}\ra T_X$ shows that $\sigma^2=\tu{id}=\tau^2$ and therefore $\sigma$ defines a real structure on $\Lie{X}$.    
\endproof
\end{lemma}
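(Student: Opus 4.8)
The plan is to exploit the universality of Namikawa's deformation $\pi\co\Lie{X}\ra T_X$, following the strategy of \cite[Lemma 2.1]{Donaldson:Friedman}. First I would form the conjugate deformation $\overline{\pi}\co\overline{\Lie{X}}\ra\overline{T_X}$, which is automatically a graded Poisson deformation of the conjugate affine symplectic variety $(\overline{X},o,\overline{\omega^c},\overline{t})$: conjugating all the local model spaces reverses holomorphicity but preserves flatness, the $\C^\ast$-equivariance (with the conjugate action $\overline t$), and the Poisson structure (now of the conjugate weight). The crucial input is the antiholomorphic involution $j$ of part (iii) of Definition \ref{def:Singular:HK:Cones}: since $j^\ast\omega^c=\overline{\omega^c}$ and $j\circ t=\overline t\circ j$, the map $j$ furnishes a Poisson $\C^\ast$-equivariant isomorphism $(\overline X,o,\overline{\omega^c},\overline t)\cong(X,o,\omega^c,t)$. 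Using $j$ to identify central fibres, $\overline{\pi}$ becomes a graded Poisson deformation of $(X,o,\omega^c,t)$ in the original sense.

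Next I would invoke the universal property of Theorem \ref{thm:Namikawa:Poisson:def}. Because $\pi\co\Lie{X}\ra T_X$ is universal, the deformation $\overline{\pi}$ (identified with a deformation of $X$ via $j$) is induced from $\pi$ by a unique classifying morphism $\tau\co\overline{T_X}\ra T_X$, together with a unique compatible morphism $\sigma\co\overline{\Lie{X}}\ra\Lie{X}$ covering it and restricting to $j$ on the central fibre. This produces exactly the commutative square in the statement, with $\sigma$ and $\tau$ automatically compatible with the $\C^\ast$-actions and the Poisson structures, since the universal property is formulated within the category of graded Poisson deformations. I would remark that the target $T_X\simeq\C^r$ is smooth, so $\tau$ is literally an antiholomorphic map of complex manifolds; the only subtlety is on the total space $\Lie{X}$, which need not be reduced, and this is precisely why the preamble set up real structures for arbitrary (possibly non-reduced) Poisson complex spaces as involutions $(\sigma,\tilde\sigma)\co(Y,\mathcal O_Y)\to(Y,\overline{\mathcal O_Y})$.

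Finally I would verify the involutivity $\sigma^2=\tu{id}$ and $\tau^2=\tu{id}$, which is where the real content lies. The composition $\sigma\circ\overline{\sigma}\co\Lie{X}\ra\Lie{X}$ (where $\overline\sigma$ is the conjugate of $\sigma$) is again a morphism of graded Poisson deformations of $(X,o,\omega^c,t)$ into itself, inducing $j^2=\tu{id}$ on the central fibre and covering $\tau\circ\overline\tau$ on the base. The uniqueness clause of the universal property forces any such self-morphism lifting the identity on $X$ to be the identity, whence $\sigma^2=\tu{id}$ and, by the same argument on the base, $\tau^2=\tu{id}$. I expect this uniqueness-of-lifts step to be the main obstacle: one must argue carefully that $\sigma$ really does give a morphism of \emph{graded Poisson} deformations when precomposed with conjugation, so that Namikawa's universal property applies, rather than merely a morphism of flat families; this is the reason the definition of conjugate Poisson structure was set up in detail before the lemma. (There is a typo in the displayed compatibility relation $\pi\circ\sigma=\tau\circ\sigma$, which should read $\pi\circ\sigma=\tau\circ\overline{\pi}$, matching the commutative square.)
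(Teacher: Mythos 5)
Your proposal follows exactly the paper's argument: conjugate the universal deformation, use $j$ to identify it with a graded Poisson deformation of $(X,o,\omega^c,t)$ itself, invoke the universal property to produce the commutative square defining $\sigma$ and $\tau$, and iterate together with the uniqueness clause to conclude $\sigma^2=\tu{id}=\tau^2$. Your parenthetical observation is also correct: the displayed compatibility relation should read $\pi\circ\sigma=\tau\circ\overline{\pi}$, matching the commutative diagram, and the statement as printed contains a typo.
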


\begin{remark}\label{rmk:Namikawa:real}
As explained in \cite[Remark (a), \S 4.2]{Donaldson:Friedman} we can assume without loss of generality that the real structure $\tau$ on $T_X\simeq \C^r$ is the standard complex conjugation.	 Compatibility of the real structure with the $\C^\ast$--action implies that in these coordinates the $\C^\ast$--action on $T_X$ remains diagonal. 
\end{remark}

The following definition will play a key role in distinguishing a special set of twistor lines. 

\begin{definition}\label{def:Regular:Poisson:def}
A point $m\in \Lie{X}$ is called \emph{regular} if $m$ is a smooth point of $\Lie{X}$ at which $\pi\co\Lie{X}\ra T_X$ is a holomorphic submersion. We denote the set of regular points of $\Lie{X}$ by $\Lie{X}_{reg}$.	
\end{definition}

Note that for a point $m\in\Lie{X}_{reg}$ we have an exact sequence of vector spaces
\begin{equation}\label{eq:Regular:twistor:line:pointwise}
0\ra T^v_m\Lie{X}\longrightarrow T_m \Lie{X}\stackrel{d_m\pi}{\longrightarrow}\C^r\ra 0,
\end{equation}
where $T^v_m\Lie{X}$ is the tangent space to the symplectic leaf of $\Lie{X}$ containing $m$.

\begin{lemma}\label{lem:Regular:Points:Central:Fibre}
$\Lie{X}_{reg}\cap \pi^{-1}(0)= X_{reg}$.
\proof
It is clear from Definition \ref{def:Regular:Poisson:def} that $\Lie{X}_{reg}\cap \pi^{-1}(0)\subseteq X_{reg}$. The opposite inclusion follows from the fact that $\pi\co\Lie{X}\ra T_X$ is a locally trivial flat deformation at points of $X_{reg}$, \ie every point $p\in X_{reg}$ has a neighbourhood $\mathcal{U}_p\subset\Lie{X}$ with $\mathcal{U}_p\cap \pi^{-1}(0)\subset X_{reg}$ such that $\mathcal{U}_p\simeq (\mathcal{U}_p\cap \pi^{-1}(0))\times \pi (\mathcal{U}_p)$. This follows from smoothness of the deformation space $T_X$ in Theorem \ref{thm:Namikawa:Poisson:def} and the fact that Poisson deformations of a holomorphic symplectic disc are trivial by the existence of Darboux coordinates. 
\endproof
\end{lemma}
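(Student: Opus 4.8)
My plan is to treat the two inclusions separately. The inclusion $\Lie{X}_{reg}\cap\pi^{-1}(0)\subseteq X_{reg}$ I would settle directly from Definition \ref{def:Regular:Poisson:def}: if $m\in\Lie{X}_{reg}$, then $\Lie{X}$ is smooth at $m$ and $d_m\pi$ surjects onto $\C^r$, so by the holomorphic regular-value theorem the fibre $\pi^{-1}(0)$ is a smooth complex submanifold near $m$, of dimension $\dim_m\Lie{X}-r$. Identifying $\pi^{-1}(0)$ with $X$, this says that $m$ is a smooth point of $X$, i.e. $m\in X_{reg}$.

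The substance lies in the reverse inclusion, which I would reduce to a local triviality statement. Fix $p\in X_{reg}$. I would aim to produce a neighbourhood $\mathcal{U}_p\subset\Lie{X}$ of $p$ together with a biholomorphism $\mathcal{U}_p\simeq(\mathcal{U}_p\cap\pi^{-1}(0))\times\pi(\mathcal{U}_p)$ intertwining $\pi$ with the projection to the second factor. Granting such a product chart, $\Lie{X}$ is smooth at $p$ — being locally the product of the smooth germ $(X_{reg},p)$ with the smooth base $T_X\cong\C^r$ of Theorem \ref{thm:Namikawa:Poisson:def} — and $\pi$ is locally a projection, hence a submersion at $p$. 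Thus $p\in\Lie{X}_{reg}\cap\pi^{-1}(0)$, as required.

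To build the product chart I would combine two facts. First, since $p$ lies in the smooth locus and carries the holomorphic symplectic form $\omega^c$ from part (iv) of Definition \ref{def:Singular:HK:Cones}, the holomorphic Darboux theorem identifies a neighbourhood of $p$ in $X$, as a holomorphic Poisson space, with a polydisc $(D,\omega_{std})\subset\C^{2n}$ carrying the constant standard symplectic Poisson structure. Second — and this is the crucial input — I would use that Poisson deformations of such a symplectic polydisc are trivial: any Poisson deformation of $(D,\omega_{std})$ over a base is locally isomorphic to the trivial (product) one. Restricting the universal family $\pi$ to the preimage of a small neighbourhood of $p$ then presents a Poisson deformation of $D$ over $T_X$, which is therefore locally a product; since $T_X\cong\C^r$ is smooth, this furnishes exactly the chart $\mathcal{U}_p\simeq(\mathcal{U}_p\cap\pi^{-1}(0))\times\pi(\mathcal{U}_p)$.

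The main obstacle is precisely this triviality of Poisson deformations of the symplectic disc. Heuristically it expresses the absence of local moduli for symplectic forms, but making it rigorous requires controlling the whole Poisson deformation functor of $(D,\omega_{std})$ — not merely its first-order part — and checking that the trivialising Darboux-type change of coordinates can be carried out over the entire base $T_X$ in a $\pi$-compatible and convergent way, rather than only fibre by fibre. Once this local model is in place, the flatness of the product chart and the submersion property of $\pi$ at $p$ follow immediately.
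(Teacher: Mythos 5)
Your proposal is correct and follows essentially the same route as the paper: the easy inclusion via the regular-value theorem, and the reverse inclusion via local triviality of the Poisson deformation near smooth points of the central fibre, deduced from Darboux coordinates and the smoothness of $T_X$. The only difference is that you flag the full (not merely first-order) triviality of Poisson deformations of a symplectic polydisc as a point requiring care, which the paper asserts without further elaboration.
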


\subsection{The twistor construction}

We now promote the complex deformations studied in the previous subsection to hyperk\"ahler deformations via twistor methods.

\subsubsection{The twistor space}

Let $(X,o,\omega^c,t,j)$ be a conical hyperk\"ahler variety in the sense of Definition \ref{def:Singular:HK:Cones} and denote by $\pi\co\Lie{X}\ra T_X$ its universal graded Poisson deformation together with the real structures $\sigma$ and $\tau$ of Lemma \ref{lem:Namikawa:real}.

The idea now is to use the $\C^\ast$--action of $\Lie{X}$ to construct a fibration over $\PP^1$ with fibre $\Lie{X}$ by twisting the principal $\C^\ast$--bundle $\mathcal{O}(2)^\times$ (i.e.\ the degree $2$ line bundle with the $0$-section removed) by $\Lie{X}$.

\begin{remark*}
If the weight of the $\C^\ast$--action on the symplectic form $\omega^c$ is $l=2$, which arises from a hyperk\"ahler cone $\tu{C}(\Sigma)$ with an effective action of $\sunitary{2}$ instead of $\sorth{3}$, we twist instead $\mathcal{O}(1)^\times$.	
\end{remark*}

We will denote the total space of this fibration by $\Lie{X}(2)$. Explicitly, $\Lie{X}(2)$ is the union of two copies of $\C\times\Lie{X}$ identified over $\C^\ast\times\Lie{X}$ via $(\zeta',x') = (\zeta^{-1},\zeta^{-2}\cdot x)$. Define in a similar way $T_X(2)$ and observe that $T_X(2)\simeq \bigoplus_{i=1}^r{\mathcal{O}(2d_i)}$. The projection $\pi\co\Lie{X}\ra T_X$ then becomes a holomorphic fibre bundle map $\pi\co\Lie{X}(2)\ra T_X(2)$ over $\PP^1$.

The antipodal map and the real structure of $T_X$ induce a real structure on sections of $T_X(2)$.  

\begin{prop}\label{prop:Twistor:space}
Let $s$ be a real section of $T_X(2)$ and denote by $Z_s$ the smooth locus of $\pi^{-1}(s)\subset \Lie{X}(2)$. Then $Z_s$ is a hyperk\"ahler twistor space, \ie there is a holomorphic projection $p\co Z_s\ra \PP^1$, a holomorphic section $\omega^c$ of $\Lambda^2T_F^\ast Z_s\otimes\mathcal{O}(2)$ which restricts to a symplectic form on each fibre of $p$ and a compatible real structure lifting the antipodal map on $\PP^1$.
\proof
It is clear that $Z_s$ is a holomorphic fibre bundle $p\co Z_s\ra\PP^1$, the fibres of which are symplectic leaves of $\Lie{X}$. The (holomorphic) family of holomorphic symplectic forms $\omega^c$ on the symplectic leaves of $\Lie{X}$ induces the holomorphic section of $\Lambda^2T_F^\ast Z_s\otimes\mathcal{O}(2)$ which restricts to a symplectic form on each fibre. Here the factor $\mathcal{O}(2)$ arises precisely from the fact that the $\C^\ast$--action on $\Lie{X}$, which we use to twist $\mathcal{O}(2)^\times$ in the construction of $\Lie{X}(2)$, acts with weight $-1$ on its Poisson bracket (which is dual to $\omega^c$ on the symplectic leaves).

Finally, since $s$ is real, the real structure $\sigma$ on $\Lie{X}$ induces a real structure on $Z_s$ compatible with the fibre-wise holomorphic symplectic form and inducing the antipodal map on $\PP^1$. 
\endproof 
\end{prop}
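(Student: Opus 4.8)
The plan is to verify, one at a time, the three structures demanded of a twistor space, using throughout the explicit presentation of $\Lie{X}(2)$ as two copies of $\C\times\Lie{X}$ glued over $\C^\ast\times\Lie{X}$ by $(\zeta',x')=(\zeta^{-1},\zeta^{-2}\cdot x)$ together with the fact that $\pi\co\Lie{X}(2)\ra T_X(2)$ is a fibre bundle map over $\PP^1$. For the holomorphic projection, composing the bundle projection $\Lie{X}(2)\ra\PP^1$ with the inclusion $\pi^{-1}(s)\hookrightarrow\Lie{X}(2)$ gives a holomorphic map whose fibre over $\zeta$ is, in a local trivialisation of $T_X(2)$, the fibre $\pi^{-1}(s(\zeta))$ of Namikawa's deformation; restricting to the smooth locus yields the holomorphic fibration $p\co Z_s\ra\PP^1$, whose fibres are the smooth symplectic leaves of $\Lie{X}$ cut out by $\pi=s(\zeta)$.

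Next I would build the $\mathcal{O}(2)$-twisted symplectic form. Each fibre $\pi^{-1}(s(\zeta))$ carries the fibre-wise holomorphic symplectic form $\omega^c$ supplied by the non-degeneracy of the Poisson structure on the fibres of $\pi$. The only point of substance is that, as $\zeta$ varies, these forms assemble into a section of $\Lambda^2T_F^\ast Z_s\otimes\mathcal{O}(2)$ rather than of $\Lambda^2T_F^\ast Z_s$ itself. This is a weight count: on the overlap the two chartwise copies of $\omega^c$ differ by the pullback under $x\mapsto\zeta^{-2}\cdot x$, and since the good $\C^\ast$-action scales $\omega^c$ with weight $l=1$, this pullback multiplies $\omega^c$ by $\zeta^{-2}$, which is precisely the transition function of $\mathcal{O}(2)$. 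Equivalently, the twist degree $2$ times the weight $l=1$ gives $2$; in the normalisation $l=2$ one twists $\mathcal{O}(1)^\times$ instead and obtains the same factor $\mathcal{O}(2)$. Hence the forms $\omega^c$ glue to the required $\mathcal{O}(2)$-valued fibrewise symplectic form.

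For the real structure I would combine the real structure $\sigma$ on $\Lie{X}$ from Lemma \ref{lem:Namikawa:real} with the antipodal real structure on the $\mathcal{O}(2)^\times$ factor to produce an antiholomorphic involution on $\Lie{X}(2)$ covering $\zeta\mapsto-\bar\zeta^{-1}$. That this is well defined across the two charts and squares to the identity follows from $\sigma^2=\tu{id}$ and from the compatibility $\sigma\circ t=\bar t\circ\sigma$ of $\sigma$ with the $\C^\ast$-action; that it covers $\tau$ on $T_X(2)$ means it preserves $\pi^{-1}(s)$ precisely because $s$ is a real section, so it descends to $Z_s$. Its compatibility with $\omega^c$ is inherited from $\sigma$ being a Poisson real structure (on the central fibre this is the condition $j^\ast\omega^c=\overline{\omega^c}$).

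I expect the main difficulty to be bookkeeping rather than a genuine obstruction: pinning down the $\mathcal{O}(2)$ twist of $\omega^c$ with the correct weight, and, because $\Lie{X}$ is in general singular and possibly non-reduced, checking that all three structures survive the passage to the smooth locus $Z_s$ — in particular that $\omega^c$ stays non-degenerate on the smooth fibres and that the antiholomorphic involution genuinely preserves $Z_s$. Each of these rests only on the compatibilities already recorded in Lemma \ref{lem:Namikawa:real} and on the defining properties of the good $\C^\ast$-action.
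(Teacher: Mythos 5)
Your proposal is correct and follows essentially the same route as the paper: identify the fibration from the chartwise construction of $\Lie{X}(2)$, obtain the $\mathcal{O}(2)$ twist of $\omega^c$ from the weight of the good $\C^\ast$--action (you phrase it via the weight $l=1$ on $\omega^c$ under the transition $x\mapsto\zeta^{-2}\cdot x$, the paper equivalently via the weight $-1$ on the dual Poisson bracket), and descend the real structure of Lemma \ref{lem:Namikawa:real} to $Z_s$ using reality of $s$. You simply supply the chart-level bookkeeping that the paper leaves implicit.
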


\subsubsection{Twistor lines} 

In order to construct a hyperk\"ahler metric from $Z_s$ we must consider the existence of real holomorphic sections of $Z_s$ with normal bundle a sum of $\mathcal{O}(1)$'s. 
 
The key notion, inspired by \cite{D'Amorim:SantaCruz}, is the one of a \emph{regular twistor line}.

\begin{definition}
A section of $m$ of $\Lie{X}(2)\ra \PP^1$ is called \emph{regular} if $m(\zeta)\in \Lie{X}_{reg}$ for all $\zeta\in\PP^1$.	
\end{definition}
 
For a regular section $m$ of $\Lie{X}(2)$ with $\pi (m) = s\in H^0 (\PP^1;T_X(2))$ we have a sequence of holomorphic vector bundles over $\PP^1$
\begin{equation}\label{eq:Regular:twistor:line}
0\ra m^\ast T^v_F\Lie{X}(2) \longrightarrow m^\ast T_F\Lie{X}(2) \stackrel{d_m\pi}{\longrightarrow} T_X(2)\simeq\bigoplus_{i=1}^r{\mathcal{O}(2d_i)}\ra 0
\end{equation}
which over every point of $\PP^1$ reduces to \eqref{eq:Regular:twistor:line:pointwise}. The first bundle in \eqref{eq:Regular:twistor:line} is precisely the normal bundle of $m$ in $Z_s$.

\begin{lemma}\label{lem:Regular:twistor:line}
A regular real section $m$ of $Z_s$ is a twistor line, \ie it has normal bundle in $Z_s$ a sum of $\mathcal{O}(1)$'s, if and only if
\begin{equation}\label{eq:Strongly:regular:twistor:line}
d_m\pi\co H^0 (\PP^1; m^\ast T_F\Lie{X}(2) \otimes\mathcal{O}(-2))\ra H^0(\PP^1; \bigoplus_{i=1}^r{\mathcal{O}(2d_i-2)})
\end{equation}
is an isomorphism.
\proof
Note that $m$ is a twistor line if and only if $m^\ast T^v_F\Lie{X}(2)\otimes\mathcal{O}(-2)$ has vanishing cohomology. Tensoring \eqref{eq:Regular:twistor:line} by $\mathcal{O}(-2)$ and passing to the long exact sequence in cohomology shows that the kernel and cokernel of the map in \eqref{eq:Strongly:regular:twistor:line} are identified with, respectively, $H^0(\PP^1;m^\ast T^v_F\Lie{X}(2)\otimes\mathcal{O}(-2))$ and the kernel of $H^1(\PP^1;m^\ast T^v_F\Lie{X}(2)\otimes\mathcal{O}(-2))\ra H^1(\PP^1;m^\ast T_F\Lie{X}(2)\otimes\mathcal{O}(-2))$. Note however that the fibre-wise holomorphic symplectic form on $Z_s$ yields an isomorphism $m^\ast T_F^v\mathcal{X}(2) \otimes\mathcal{O}(-2) \simeq \left(m^\ast T_F^v\mathcal{X}(2)\right)^\ast$, which, combined with Serre duality, shows that $H^0(\PP^1;m^\ast T^v_F\Lie{X}(2)\otimes\mathcal{O}(-2))$ and $H^1(\PP^1;m^\ast T^v_F\Lie{X}(2)\otimes\mathcal{O}(-2))$ are dual vector spaces.     
\endproof	
\end{lemma}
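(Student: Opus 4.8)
The plan is to reduce everything to the long exact sequence in cohomology associated to \eqref{eq:Regular:twistor:line}, and to exploit the fibre-wise symplectic form to force the degree-$0$ and degree-$1$ obstructions to vanish together. Write $N := m^\ast T^v_F\Lie{X}(2)$ for the normal bundle of $m$ in $Z_s$. By Grothendieck's splitting theorem $N\simeq\bigoplus_j\mathcal{O}(a_j)$, and since $\mathcal{O}(-1)$ is the unique line bundle on $\PP^1$ with vanishing cohomology in both degrees, $m$ is a twistor line --- i.e.\ every $a_j=1$ --- if and only if $N\otimes\mathcal{O}(-2)$ has no cohomology in either degree.

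The first thing I would record is the self-duality coming from the symplectic structure. Because the fibre-wise holomorphic symplectic form of Proposition \ref{prop:Twistor:space} restricts to a non-degenerate pairing on each symplectic leaf, it induces a bundle isomorphism $N\simeq N^\ast\otimes\mathcal{O}(2)$ over $\PP^1$, equivalently $N\otimes\mathcal{O}(-2)\simeq N^\ast$. Combined with Serre duality $H^1(\PP^1;E)^\ast\simeq H^0(\PP^1;E^\ast\otimes\mathcal{O}(-2))$ applied to $E=N\otimes\mathcal{O}(-2)$, this gives a canonical isomorphism
\[
H^1(\PP^1;N\otimes\mathcal{O}(-2))^\ast\simeq H^0(\PP^1;N\otimes\mathcal{O}(-2)).
\]
Hence these two spaces have the same dimension and, crucially, vanish simultaneously; the twistor-line condition is therefore equivalent to the single requirement $H^0(\PP^1;N\otimes\mathcal{O}(-2))=0$.

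Next I would tensor \eqref{eq:Regular:twistor:line} by $\mathcal{O}(-2)$ and pass to the long exact cohomology sequence. Reading off the initial terms identifies the kernel of the map \eqref{eq:Strongly:regular:twistor:line} with $H^0(\PP^1;N\otimes\mathcal{O}(-2))$, and its cokernel with the kernel of the map $H^1(\PP^1;N\otimes\mathcal{O}(-2))\to H^1(\PP^1;m^\ast T_F\Lie{X}(2)\otimes\mathcal{O}(-2))$ induced by the inclusion, in particular with a subspace of $H^1(\PP^1;N\otimes\mathcal{O}(-2))$. With this dictionary the equivalence is immediate: if $m$ is a twistor line then both $H^0$ and $H^1$ of $N\otimes\mathcal{O}(-2)$ vanish, so \eqref{eq:Strongly:regular:twistor:line} has zero kernel and zero cokernel and is an isomorphism; conversely, if \eqref{eq:Strongly:regular:twistor:line} is an isomorphism then its injectivity forces $H^0(\PP^1;N\otimes\mathcal{O}(-2))=0$, whence $H^1(\PP^1;N\otimes\mathcal{O}(-2))=0$ by the self-duality of the previous step, and $m$ is a twistor line.

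The routine part is the diagram chase; the step that carries the real content --- and the one I would be most careful about --- is the symplectic self-duality. I would need to check that $\omega^c$ genuinely pairs the leaf-tangent directions $T^v_F$ non-degenerately (this is exactly where regularity of $m$, ensuring the fibres of $Z_s\to\PP^1$ are honest symplectic leaves, enters) and that the $\mathcal{O}(2)$ twist in $\omega^c\in\Gamma(\Lambda^2 T_F^\ast Z_s\otimes\mathcal{O}(2))$ is the one that makes the induced pairing land in $N^\ast\otimes\mathcal{O}(2)$. Getting this twist right is precisely what makes $H^0$ and $H^1$ of $N\otimes\mathcal{O}(-2)$ Serre-dual to one another, rather than merely related, and hence what turns an isomorphism on $H^0$ into vanishing of the full cohomology.
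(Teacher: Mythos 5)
Your proposal is correct and follows essentially the same route as the paper: the long exact sequence of \eqref{eq:Regular:twistor:line} tensored by $\mathcal{O}(-2)$ to identify the kernel and cokernel of \eqref{eq:Strongly:regular:twistor:line}, together with the symplectic self-duality of the normal bundle and Serre duality to make $H^0$ and $H^1$ of $m^\ast T^v_F\Lie{X}(2)\otimes\mathcal{O}(-2)$ vanish simultaneously. You merely spell out the final deduction (injectivity forces $H^0=0$, hence $H^1=0$ by duality) that the paper leaves implicit.
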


The twistor correspondence \cite[Theorem 3.3]{HKLR} yields the existence of a pseudo-hyperk\"ahler structure $(g_s,\triple{\omega}_s)$ on each component of the space of twistor lines of $Z_s$ and by restriction on each non-empty component of the space $M_X(s)$ of \emph{regular} twistor lines. We want to show that there exists a non-empty component $M_X(s)^+$ of $M_X(s)$ with positive definite hyperk\"ahler structure.

We make two preliminary observations. Firstly, $Z_0$ is a fibration over $\PP^1$ with fibres the smooth locus of $X$. By part (iv) of Definition \ref{def:Singular:HK:Cones} there exists a component of the space of twistor lines of $Z_0$ corresponding to the conical hyperk\"ahler metric on $X_{reg}$. By Lemma \ref{lem:Regular:Points:Central:Fibre} any such twistor line is in fact regular. We denote by $M_X(0)^+$ the non-empty component of regular twistor lines of $Z_0$ corresponding to the hyperk\"ahler cone metric $g_0$ on $X_{reg}$. Evaluation of the section $m\in M_X(0)^+$ at any fixed point in $\PP^1$ identifies $M_X(0)^+$ with $X_{reg}$.

Our second observation is that for every non-zero $t\in\R^+$, $M_X(t\cdot s)$ is just $M_X(s)$ with scaled pseudo-hyperk\"ahler metric $g_{t\cdot s} = t\, g_s$. Indeed, assuming as in Remark \ref{rmk:Namikawa:real} that the real structure $\tau$ on $T_X\simeq \C^r$ is standard complex conjugation, we immediately have that $t\cdot s$ remains a real section whenever $s$ is. Secondly, $\C^\ast$--equivariance of Namikawa's universal Poisson deformation $\pi\co \Lie{X}\ra T_X$ yields a commutative diagram
 \[
\begin{CD}
T_m\Lie{X} @>d_m\pi >> T_s T_X \\
@V d_m tVV  @VV d_st V\\
T_{t\cdot m}\Lie{X} @> d_{t\cdot m}\pi >> T_{t\cdot s}T_X
\end{CD}
\] 
at any point $m\in \Lie{X}_{reg}$ with $s=\pi(m)$. The vertical maps are isomorphisms that induce identifications of the vector bundles in the exact sequences \eqref{eq:Regular:twistor:line} corresponding to the sections $m$ and $t\cdot m$. It then follows from the cohomological interpretation of Lemma \ref{lem:Regular:twistor:line} that $t\cdot m$ is a regular twistor line whenever $m$ is. Since the action of $t$ identifies the twistor spaces $Z_s$ and $Z_{t\cdot s}$ and $\C^\ast$ acts with weight $1$ on the fibrewise holomorphic symplectic form, the claim follows. Note in particular that, since the $\C^\ast$--action on $T_X$ fixes the origin, this observation recovers the fact that $g_0$ is scale invariant. 

\begin{theorem}\label{thm:HK:def:HK:cones}
Let $(X,o,\omega^c,t,j)$ be a conical hyperk\"ahler variety. Then for every real section $s$ of $T_X(2)\simeq \bigoplus_{i=1}^r{\mathcal{O}(2d_i)}$ there exists a component $M_X(s)^+$ of regular twistor lines of $Z_s$ such that the pseudo-hyperk\"ahler metric $g_s$ is positive definite on $M_X(s)^+$.
\proof
Fix a real section $s$ of $T_X(2)$ and a regular twistor line $m_0\in M_X(0)^+$. By the scaling property just discussed, it suffices to show that we can deform $m_0$ to a regular twistor line $m_t$ of $Z_{t\cdot s}$ for $t\in \R^+$ sufficiently small and that $g_{t\cdot s}$ is positive definite at $m_t$.
 
Now, since $m_0^\ast T^v_F\Lie{X}(2)$ is a sum of $\mathcal{O}(1)$'s and $d_i\geq 1$, the long exact sequence in cohomology induced by \eqref{eq:Regular:twistor:line} shows that
\begin{enumerate}
\item $H^1 (\PP^1; m_0^\ast T_F\Lie{X}(2))=0$, so the space of sections of $\Lie{X}(2)$ is smooth at $m_0$;
\item the map $d_{m_0}\pi\co H^0 (\PP^1; m_0^\ast T_F\Lie{X}(2))\ra H^0(\PP^1; T_X(2))$ is surjective.	
\end{enumerate}
An application of the Implicit Function Theorem implies that $m_0$ can be deformed to a section of $Z_{t\cdot s}$ for all $t$ sufficiently small. Up to taking $t$ even smaller if necessary, $m_t$ is automatically a regular twistor line of $Z_s$ and the metric $g_{t\cdot s}$ is automatically positive definite at $m_{t}$ by openness of these conditions.
\endproof 
\end{theorem}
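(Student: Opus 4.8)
The plan is to reduce everything to an arbitrarily small deformation by means of the scaling property established just before the statement, and then to deform the cone twistor line $m_0\in M_X(0)^+$ by an application of the implicit function theorem, the feasibility of which rests on a short cohomology computation driven by $m_0$ being a twistor line of $Z_0$ together with the positivity $d_i\geq 1$ of the weights. Concretely, I would fix a real section $s$ of $T_X(2)$ and a regular twistor line $m_0\in M_X(0)^+$, which exists and is identified with a point of $X_{reg}$ by the discussion preceding the theorem. Since it then suffices, by the scaling observation, to produce a positive-definite regular twistor line of $Z_{t\cdot s}$ for some small $t\in\R^+$, the whole problem becomes an infinitesimal deformation problem based at $m_0$.

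The first substantive step is the cohomology input. Because $m_0$ is a twistor line, its normal bundle $m_0^\ast T^v_F\Lie{X}(2)$ is a sum of $\mathcal{O}(1)$'s, so it has vanishing $H^1$. Feeding this into the long exact sequence of \eqref{eq:Regular:twistor:line}, and using $2d_i\geq 2$ so that $H^1(\PP^1;\bigoplus_i\mathcal{O}(2d_i))=0$, I would extract the two facts that power the argument: (i) $H^1(\PP^1;m_0^\ast T_F\Lie{X}(2))=0$, whence the finite-dimensional space of holomorphic sections of $\Lie{X}(2)\ra\PP^1$ is smooth and unobstructed at $m_0$; and (ii) the differential $d_{m_0}\pi\co H^0(\PP^1;m_0^\ast T_F\Lie{X}(2))\ra H^0(\PP^1;T_X(2))$ is surjective. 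I would then consider the holomorphic map $\Phi\co m\mapsto\pi\circ m$ from this smooth section space to $H^0(\PP^1;T_X(2))$: by (i) the source is a manifold and by (ii) the differential $d_{m_0}\Phi$ is onto, so $\Phi$ is a submersion near $m_0$ and its image contains a neighbourhood of $\Phi(m_0)=0$. Running this compatibly with the real structures $\sigma$ and $\tau$ of Lemma \ref{lem:Namikawa:real} --- equivalently, applying the implicit function theorem on the totally real locus of real sections and using that $t\cdot s$ is real for $t\in\R^+$ --- yields, for all small $t>0$, a real section $m_t$ with $\pi\circ m_t=t\cdot s$, depending continuously on $t$ with $m_t\to m_0$ as $t\to 0$.

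It then remains to verify that $m_t$ is a positive-definite regular twistor line, and here everything is soft openness. Regularity of $m_t$ follows from openness of $\Lie{X}_{reg}$, the convergence $m_t\to m_0$, and compactness of $\PP^1$. That $m_t$ is a twistor line follows from Lemma \ref{lem:Regular:twistor:line}: by the self-duality used there, $m_t^\ast T^v_F\Lie{X}(2)\otimes\mathcal{O}(-2)$ has vanishing cohomology iff its $H^0$ vanishes, an open condition holding at $m_0$. Positive-definiteness of $g_{t\cdot s}$ at $m_t$ holds for small $t$ because $g_0$ is positive definite at $m_0$ and the associated symmetric form varies continuously; since the pseudo-hyperk\"ahler metric of the twistor correspondence \cite{HKLR} is everywhere non-degenerate, its signature is locally constant, so $g_{t\cdot s}$ is in fact positive definite on the entire connected component of $M_X(t\cdot s)$ containing $m_t$. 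Finally, rescaling by $t^{-1}$ through the $\C^\ast$--action --- which by the scaling discussion preserves regularity, the twistor-line property and positive-definiteness --- transports this to a positive-definite component $M_X(s)^+$ of regular twistor lines of $Z_s$.

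I expect the main obstacle to be the deformation step, that is, establishing the unobstructedness (i) and surjectivity (ii) and then setting up the implicit function theorem \emph{equivariantly} for the real structures so that the produced section $m_t$ is genuinely real; once (i) and (ii) are in hand, the passage from a single positive-definite point to a full positive-definite component, and the return from $t\cdot s$ to $s$, are purely formal consequences of openness, non-degeneracy of the twistor metric, and $\C^\ast$--equivariance.
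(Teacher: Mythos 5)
Your proposal is correct and follows essentially the same route as the paper's proof: reduce via the scaling property to an arbitrarily small deformation, extract unobstructedness and surjectivity of $d_{m_0}\pi$ from the long exact sequence of \eqref{eq:Regular:twistor:line} using that $m_0^\ast T^v_F\Lie{X}(2)$ is a sum of $\mathcal{O}(1)$'s and $d_i\geq 1$, apply the implicit function theorem (compatibly with the real structures), and conclude by openness of regularity, the twistor-line condition and positive definiteness. Your added remarks on equivariance with respect to $\sigma,\tau$ and on local constancy of the signature across the component are correct elaborations of what the paper leaves implicit.
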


The hyperk\"ahler manifold $(M_X(s)^+,g_s)$ is \emph{asymptotic} to the hyperk\"ahler cone $(X_{reg},g_0)$ in the following sense. Fix a relatively compact open set $\mathcal{U}$ of $M_X(0)^+\cap \{ r=1\}\subset X_{reg}$ and consider the set of rays $\tu{C}_1(\mathcal{U}):=\{ t\cdot\mathcal{U}\, |\, t\geq 1\}\subset M_X(0)^+$. The application of the Implicit Function Theorem in the proof of Theorem \ref{thm:HK:def:HK:cones} and the scaling action discussed before the Theorem imply that there exists an open neighbourhood $\mathcal{V}$ of $0\in H^0(\PP^1;T_X(2))^\R$ such that for every $s\in\mathcal{V}$ the set $M_X(s)^+$ contains an open set diffeomorphic (actually, isomorphic in the real analytic category) to $\tu{C}_1(\mathcal{U})$. Fix any non-zero $s\in\mathcal{V}$. Without loss of generality we can assume that $\varepsilon\cdot s\in \mathcal{V}$ for any $\varepsilon\in [0,1]$. Then the scaling property we have discussed implies that, as $\varepsilon\ra 0$, the family of blow-downs $(\tu{C}_1(\mathcal{U}), \varepsilon g_{s}) = (\tu{C}_1(\mathcal{U}), g_{\varepsilon\cdot s})$ converges smoothly (in fact, real analytically with respect to the real analytic structures induced by the holomorphic structures of the twistor spaces and their spaces of sections) to the hyperk\"ahler cone $(\tu{C}_1(\mathcal{U}), g_{0})$.   

\subsubsection{Symplectic resolutions}

As we will see in the next section the hyperk\"ahler metrics produced by Theorem \ref{thm:HK:def:HK:cones} are in general incomplete. Examples however suggest that complete maximal volume growth metrics with tangent cone at infinity $X$ arise from the following special situation.

Assume that the symplectic variety with a good $\C^\ast$--action $(X,o,\omega^c,t)$ admits a symplectic resolution $\widetilde{X}\ra X$, \ie a resolution of singularities with the further property that the pull-back of $\omega^c$ extends to a holomorphic symplectic form on $\widetilde{X}$ (in particular $\widetilde{X}\ra X$ is a crepant resolution). Namikawa \cite{Namikawa:Poisson:defs:affine} showed that $\widetilde{X}$ inherits a good $\C^\ast$--action and has a universal graded Poisson deformation $\widetilde{\pi}\co \widetilde{\Lie{X}}\ra \Lie{h}^\C$ fitting into a $\C^\ast$--equivariant diagram:
 \begin{equation}\label{eq:Symplectic:Resolution}
\begin{CD}
\widetilde{\Lie{X}} @>>> \Lie{X}\\
@V\widetilde{\pi}VV  @VV\pi V\\
\Lie{h}^\C @>q>> T_X
\end{CD}
\end{equation}
Here $\Lie{h}^\C\simeq H^2(\widetilde{X};\C)$ and there exists a finite Coxeter group $W$ acting linearly on $\Lie{h}^\C$ such that $T_X\simeq \Lie{h}^\C/W$ and $q$ is the quotient map. Denote by $R$ the collection of linear functionals on $\Lie{h}^\C$, the kernels of which are the reflection hyperplanes associated with the generators of the Coxeter group $W$. The $\C^\ast$--action on $\Lie{h}^\C$ is the standard scaling action with weights $(1,\dots,1)$ \cite[\S 5.4]{Namikawa:Poisson:defs:affine}. Furthermore, if $\Lie{X}'$ denotes the family $\Lie{X}'=\Lie{X}\times_{T_X}\Lie{h}^\C\stackrel{\pi'}{\ra} \Lie{h}^\C$ obtained by base change, then for every $\xi\in\Lie{h}^\C$ the fibre $\widetilde{\Lie{X}}_{\xi}$ is a smooth affine variety isomorphic to $\Lie{X}'_\xi$ whenever $\alpha (\xi)\neq 0$ for all $\alpha\in R$, and it is a smooth symplectic resolution of $\Lie{X}'_{\xi}$ otherwise.

\begin{remark*}
More generally one can work with $\widetilde{X}$ a $\Q$--factorial terminalisation of $X$. Namikawa shows that $\widetilde{\Lie{X}}$ is a locally trivial deformation of $\widetilde{X}$. In particular, $\widetilde{\Lie{X}}_\xi$, and therefore $\Lie{X}'_\xi$ for generic $\xi$, is smooth if and only if $\widetilde{X}$ is. Namikawa concludes \cite{Namikawa:Poisson:defs:affine} that $X$ admits a symplectic resolution if and only if the generic fibre of the universal Poisson deformation of $X$ is smooth.	
\end{remark*}

Now suppose that in Theorem \ref{thm:HK:def:HK:cones} the real section $s$ of $T_X(2)$ is of the form $s=q(\xi)$ for a section $\xi$ of $\Lie{h}^\C\otimes\mathcal{O}(2)$ such that there is no $\alpha\in R$ with $\alpha (\xi)=0\in H^0(\PP^1;\mathcal{O}(2))$. Using the graded Poisson deformation $\pi'\co \Lie{X}'\ra \Lie{h}^\C$ we then produce a twistor space $Z'_\xi$ as in Proposition \ref{prop:Twistor:space}.

\begin{conjecture}
	$Z'_\xi$ a singular model for the twistor space of a complete hyperk\"ahler metric on $\widetilde{X}$ given by the twistor construction of Theorem \ref{thm:HK:def:HK:cones} over a dense open set. 
\end{conjecture}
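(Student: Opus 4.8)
The plan is to exhibit the complete metric on $\widetilde X$ as the one attached by the twistor correspondence \cite{HKLR} to the \emph{resolved} twistor space built from $\widetilde{\Lie{X}}$, and then to check that Theorem \ref{thm:HK:def:HK:cones} applied to the singular model $Z'_\xi$ reproduces it away from the exceptional locus. First I would construct the candidate honest twistor space $\widetilde Z_\xi$ from the universal graded Poisson deformation $\widetilde\pi\co\widetilde{\Lie{X}}\ra\Lie{h}^\C$ of the resolution, exactly as in Proposition \ref{prop:Twistor:space}: twisting the $\C^\ast$-action on $\widetilde{\Lie{X}}$ against $\mathcal O(2)^\times$ gives $\widetilde{\Lie{X}}(2)\ra\Lie{h}^\C(2)$, and I set $\widetilde Z_\xi=\widetilde\pi^{-1}(\xi)$. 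Because $\widetilde\pi$ is a simultaneous resolution --- by the structure recalled around \eqref{eq:Symplectic:Resolution}, \emph{every} fibre $\widetilde{\Lie{X}}_{\xi(\zeta)}$ is a smooth affine symplectic variety --- the space $\widetilde Z_\xi$ is a smooth holomorphic fibre bundle over $\PP^1$ with a fibrewise holomorphic symplectic form valued in $\mathcal O(2)$. Lifting the real structure $j$ to $\widetilde X$ and repeating the universality argument of Lemma \ref{lem:Namikawa:real} for $\widetilde\pi$ then equips $\widetilde Z_\xi$ with a compatible real structure, so that it is a genuine (smooth) hyperk\"ahler twistor space.

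Next I would pin down the relationship with $Z'_\xi$. The base-change morphism $\widetilde{\Lie{X}}\ra\Lie{X}'=\Lie{X}\times_{T_X}\Lie{h}^\C$ of \eqref{eq:Symplectic:Resolution} induces a map $\widetilde Z_\xi\ra Z'_\xi$. Let $S\subset\PP^1$ be the finite set of points $\zeta$ at which $\alpha(\xi(\zeta))=0$ for some $\alpha\in R$; its finiteness is exactly the hypothesis that no $\alpha(\xi)$ vanishes identically in $H^0(\PP^1;\mathcal O(2))$. Over $\PP^1\setminus S$ the fibres of $Z'_\xi$ are already smooth and $\widetilde Z_\xi\ra Z'_\xi$ is an isomorphism, while over $S$ it desingularises the finitely many singular fibres; this makes precise the assertion that $Z'_\xi$ is a singular model of $\widetilde Z_\xi$. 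On the level of lines, any regular twistor line $m$ of $Z'_\xi$ takes values in $\Lie{X}'_{reg}$ and hence lifts uniquely to a section $\widetilde m$ of $\widetilde Z_\xi$, while conversely a twistor line of $\widetilde Z_\xi$ whose image avoids the exceptional locus descends to a regular twistor line of $Z'_\xi$. Running the cohomological criterion of Lemma \ref{lem:Regular:twistor:line} on the smooth fibration $\widetilde Z_\xi$ --- where the fibrewise symplectic form still identifies the vertical tangent bundle with its dual --- together with the openness argument in the proof of Theorem \ref{thm:HK:def:HK:cones}, I would show that these lifts remain twistor lines with normal bundle a sum of $\mathcal O(1)$'s, so that the positive-definite component $M_X(s)^+$ of Theorem \ref{thm:HK:def:HK:cones} (with $s=q(\xi)$) sits inside the space $\widetilde M_\xi$ of real twistor lines of $\widetilde Z_\xi$, and $g_s$ agrees there with the hyperk\"ahler metric furnished by \cite{HKLR}. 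The remaining twistor lines of $\widetilde Z_\xi$ --- those meeting the exceptional fibres over $S$ --- are the ones that should fill in the exceptional locus of $\widetilde X\ra X$.

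It then remains to identify $\widetilde M_\xi$ with $\widetilde X$ and to prove completeness. For the identification I would fix a base point $\zeta_0\in\PP^1\setminus S$ and study the evaluation map $\mathrm{ev}_{\zeta_0}\co\widetilde M_\xi\ra\widetilde{\Lie{X}}_{\xi(\zeta_0)}$, using that $\widetilde\pi$ is a locally trivial deformation (so the fibre $\widetilde{\Lie{X}}_{\xi(\zeta_0)}$ is diffeomorphic to $\widetilde X$) to conclude that $\mathrm{ev}_{\zeta_0}$ is a biholomorphism in the complex structure $I_{\zeta_0}$; surjectivity here amounts to showing that every point of the fibre lies on a real twistor line. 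For completeness I would combine the asymptotic analysis following Theorem \ref{thm:HK:def:HK:cones}, which exhibits an asymptotically conical end modelled on $(X_{reg},g_0)$ and hence completeness towards infinity, with the smoothness of $\widetilde Z_\xi$ and the existence of lines through the exceptional locus, which should provide a smooth metric with no interior incompleteness. Density of $M_X(s)^+$ in $\widetilde M_\xi\cong\widetilde X$ is then automatic, its complement being the lower-dimensional exceptional locus.

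I expect the crux to be precisely the global surjectivity of $\mathrm{ev}_{\zeta_0}$ and metric completeness. The twistor correspondence is local: it produces a smooth hyperk\"ahler metric only on the open parameter space of sections with the correct normal bundle, and offers no a priori reason for that space to exhaust $\widetilde X$ or for the resulting metric to be complete --- indeed the deformation argument of Theorem \ref{thm:HK:def:HK:cones}, run from $\xi=0$, recovers only the incomplete metric on $X_{reg}$ and never by itself produces the lines through the exceptional locus. Closing this gap seems to require genuinely new input: either a global period or moment-map description of the twistor lines in the spirit of Kronheimer's construction \cite{Kronheimer:ALE:Construction}, or an analytic uniqueness theorem for asymptotically conical hyperk\"ahler metrics with tangent cone $X$, or a continuity and openness-closedness argument in the parameter $\xi$. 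This is the reason the statement is posed as a conjecture rather than proved.
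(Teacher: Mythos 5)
This statement is posed in the paper as a conjecture and the paper offers no proof of it, so there is nothing to compare your argument against line by line; the authors themselves only set up the diagram \eqref{eq:Symplectic:Resolution}, restrict to sections of the form $s=q(\xi)$ with no $\alpha(\xi)$ vanishing identically, and stop. Your construction of the resolved twistor space $\widetilde Z_\xi$ from $\widetilde\pi\co\widetilde{\Lie{X}}\ra\Lie{h}^\C$, the comparison map $\widetilde Z_\xi\ra Z'_\xi$ which is an isomorphism away from the finite set $S\subset\PP^1$ where some $\alpha(\xi(\zeta))=0$, and the identification of the lines avoiding the exceptional fibres with the regular twistor lines of Theorem \ref{thm:HK:def:HK:cones} are all consistent with the framework the paper builds, and constitute a reasonable reading of what the conjecture is asserting. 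One small inaccuracy: the fibres $\widetilde{\Lie{X}}_{\xi(\zeta)}$ for $\zeta\in S$ are smooth symplectic \emph{resolutions} of the corresponding fibres of $\Lie{X}'$, hence contain compact exceptional subvarieties and are not affine; only the fibres over the complement of the reflection hyperplanes are affine. This does not affect your argument, which only uses smoothness.

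The genuine gaps are exactly the ones you name in your final paragraph, and they are why the statement is a conjecture rather than a theorem: the twistor/deformation machinery of Theorem \ref{thm:HK:def:HK:cones} is inherently local and open, producing lines only in a neighbourhood of the cone's lines, so it cannot by itself show that the space of real twistor lines of $\widetilde Z_\xi$ with normal bundle $\bigoplus\mathcal{O}(1)$ exhausts $\widetilde X$ (surjectivity of $\mathrm{ev}_{\zeta_0}$), that lines exist through the exceptional locus, or that the resulting metric is complete. Each of these would require new global input of the kind you list (a period or moment-map description \`a la Kronheimer, an analytic uniqueness or existence theorem for asymptotically conical hyperk\"ahler metrics with tangent cone $X$, or a global openness--closedness argument in $\xi$). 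Since you do not claim to close these gaps and correctly diagnose them as the obstruction, your proposal should be read as a plausible strategy and a precise formulation of what remains open, not as a proof; in that respect it accurately reflects the status of the statement in the paper.
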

For all other choices of $s$ we expect that the hyperk\"ahler metric $g_s$ produced by Theorem \ref{thm:HK:def:HK:cones} cannot be completed to a hyperk\"ahler metric on a smooth manifold.

\begin{remark*}
It is expected that in (real) dimension $4n>4$ the only isolated symplectic singularity admitting a symplectic resolution is the minimal nilpotent orbit of $\Lie{sl}(n+1,\C)$, \cf \cite[Section 2.3]{Fu}, with Calabi's hyperk\"ahler metrics on $T^\ast\PP^{n}$ \cite{Calabi} as asymptotically conical hyperk\"ahler desingularisations. Thus complete maximal volume growth hyperk\"ahler metrics arising from Theorem \ref{thm:HK:def:HK:cones} will generally have singular tangent cone at infinity.  
\end{remark*}

\section{Examples and applications}

The construction of the previous sections applies uniformly to any hyperk\"ahler cone, including quiver and hypertoric varieties, nilpotent orbits and quotients $\HH^n/\Gamma$ for finite subgroups $\Gamma$ of $\tu{Sp}(n)$. The drawback of our construction (and of twistor methods more generally) is that we can detect which metrics produced by Theorem \ref{thm:HK:def:HK:cones} are complete only in cases where we know them in advance! In this section we discuss two  classes of examples where also the new incomplete metrics produced by Theorem \ref{thm:HK:def:HK:cones} are of interest. We study deformations of regular nilpotent orbits, which have applications to hyperk\"ahler manifolds with symmetries and hyperk\"ahler quotients, and deformations of Kleinian singularities $\C^2/\Gamma$. In the latter case, we explain how the (incomplete) metrics produced by Theorem \ref{thm:HK:def:HK:cones} in addition to Kronheimer's ALE metrics on the minimal resolution of $\C^2/\Gamma$ can be used to 
 interpret a duality in physics between codimension-$4$ singularities of $\gtwo$--holonomy metrics and certain gauge theoretic equations on $3$-manifolds.

\subsection{The nilpotent cone of a simple Lie algebra}

Let $\Lie{g}$ be a simple Lie algebra of rank $r$ and denote by $G$ the corresponding simply connected compact Lie group. We use the Killing form $\langle\,\cdot\, , \,\cdot\,\rangle_{\Lie{g}}$ of $\Lie{g}$ to identify $\Lie{g}$ and its dual. Let $\Lie{h}$ be a Cartan subalgebra of $\Lie{g}$ and let $W$ be the Weyl group acting on $\Lie{h}$. Denote by $\Lie{g}^\C$, $G^\C$ and $\Lie{h}^\C$ the complexifications of $\Lie{g}$, $G$ and $\Lie{h}$, respectively.

Let $X=\mathcal{N}$ be the nilpotent cone of $\Lie{g}^\C$, \ie the closure of the adjoint orbit of a regular nilpotent element. Then $X$ is a conical hyperk\"ahler variety in the sense of Definition \ref{def:Singular:HK:Cones}: the $\C^\ast$--action is induced by standard scaling on the vector space $\Lie{g}^\C$, the real structure is the obvious one induced from the real form $\Lie{g}$ of $\Lie{g}^\C$, the holomorphic symplectic form is the Kirillov--Konstant--Souriau symplectic form on (co)adjoint orbits and the hyperk\"ahler cone metric was constructed by Kronheimer \cite{Kronheimer:Nilpotent} using moduli spaces of Nahm's equations on a semi-infinite interval.

Namikawa's universal Poisson deformation of $X$ is the adjoint quotient $\pi\co \Lie{X}=\Lie{g}^\C \ra T_X=\Lie{h}^\C/W\simeq \C^r$ \cite{Lehn:Namikawa:Sorger}, where the Poisson structure on $\Lie{g}^\C$ is induced by the Lie bracket and the Killing form. Note that the weights $d_1,\dots, d_r$ of the $\C^\ast$--action on $T_X$ are the degrees of a basis of Ad-invariant polynomials. It is also known that $X$ admits a unique symplectic resolution, the Springer resolution $T^\ast (G^\C/B)$, where $B$ is a Borel subgroup of $G^\C$. Then diagram \eqref{eq:Symplectic:Resolution} is Grothendieck's simultaneous resolution
 \[
\begin{CD}
\widetilde{\Lie{g}}^\C @>>> \Lie{g}^\C\\
@V\widetilde{\pi}VV  @VV\pi V\\
\Lie{h}^\C @>q>> \Lie{h}^\C/W
\end{CD}
\] 
where $\widetilde{\Lie{g}}^\C=G^\C\times_B \Lie{b}$ with $\Lie{b}$ the Lie algebra of $B$.

The fibre bundle $\Lie{X}(2)\ra\PP^1$ is simply the vector bundle $\Lie{g}^\C\otimes\mathcal{O}(2)$, so real sections $m$ of $\Lie{X}(2)$ are identified with triples $(T_1,T_2,T_3)\in \Lie{g}\otimes\R^3$. Similarly, real sections of $\Lie{h}^\C\otimes\mathcal{O}(2)$ are triples $\xi=(\xi_1,\xi_2,\xi_3)\in\Lie{h}\otimes\R^3$. By a result of Konstant \cite[Theorem 0.1]{Kostant:Polynomial:Rings}, an element of $\Lie{X}=\Lie{g}^\C$ is regular in the sense of Definition \ref{def:Regular:Poisson:def} when it is regular in the usual sense that its Lie algebra centraliser has minimal dimension $r$. Theorem \ref{thm:HK:def:HK:cones} yields hyperk\"ahler structures parametrised by real sections $s$ of $\bigoplus_{i=1}^r\mathcal{O}(2d_i)$. The metrics corresponding to $s=q(\xi)$ for a triple $\xi\in \Lie{h}\otimes\R^3$ are the Kronheimer--Biquard--Kovalev metrics on regular complex adjoint orbits \cite{Biquard:Nahm,Kovalev:Nahm,Kronheimer:Coadjoint}, also constructed using Nahm's equations. They are complete only when $\xi$ is regular \cite[Proposition 2]{Bielawski:orbits}, meaning that its Lie algebra centraliser reduces to $\Lie{h}$. The other metrics produced by Theorem \ref{thm:HK:def:HK:cones} are due to D'Amorim Santa-Cruz \cite{D'Amorim:SantaCruz} (and were conjectured by Joyce \cite[\S 12]{Joyce:Hypercomplex}), but their signature was not known in (real) dimension larger than $4$.    

These metrics are interesting for a number of reasons. First of all, all these metrics admit a tri-Hamiltonian action of the compact Lie group $G$. The hyperk\"ahler moment map is simply the identification of elements of $M_X(s)$ with triples $(T_1,T_2,T_3)\in \Lie{g}\otimes\R^3$. Furthermore, in any complex structure the space $M_X(s)^+$ is identified with an open set in a regular complex adjoint orbit. In \cite{Bielawski:orbits} the first author of this paper showed that every complete $G$--invariant hyperk\"ahler metric with a homogeneous complex structure must be one of the complete Kronheimer--Biquard--Kovalev metrics. The homogeneity of the complex structure means that every point of a dense open set has a neighbourhood biholomorphic to a regular adjoint $G^\C$--orbit. In particular, all other metrics produced by Theorem \ref{thm:HK:def:HK:cones} are incomplete. Note that any $G$--invariant hyperk\"ahler metric (complete or not) with the additional property above must arise from our twistor construction since the fibres of its twistor space must be adjoint orbits exactly as the one produced by Proposition \ref{prop:Twistor:space} in this setting. A classification of all $G$--invariant metrics with homogeneous complex structure therefore requires an understanding of all the connected components of the spaces $M_X(s)$ of regular twistor lines on which $g_s$ restricts to a positive definite hyperk\"ahler metric.
\par
It turns out that the structure of the connected components of $M_X(s)$ is more complicated than expected:
\begin{theorem}\label{thm:indefinite:components}
Let $X$ be the nilpotent cone of a simple complex Lie algebra $\Lie{g}^\C$.
\begin{enumerate} 
\item[(i)] $M_X(s)$ has a component on which $g_s$ is positive-definite, and a component on which $g_s$ is negative-definite.
\item[(ii)] For $\Lie{g}=\Lie{su}(3)$ there exists an open dense subset of real sections $s$ of $\mathcal{O}(4)\oplus\mathcal{O}(6)$ such that $M_X(s)$ has a component on which $g_s$ is indefinite.
\end{enumerate}
\end{theorem}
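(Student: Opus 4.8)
The plan is to realise $M_X(s)$, for $X=\mathcal{N}$ the nilpotent cone, as a real slice of a family of Hitchin fibres and to read off the signature of $g_s$ from discrete spectral data, on which it is constant. Concretely, a real section $m$ of $\Lie{X}(2)=\Lie{g}^\C\otimes\mathcal{O}(2)$ is a triple $(T_1,T_2,T_3)\in\Lie{g}\otimes\R^3$; the condition $\pi(m)=s$ fixes the characteristic coefficients $p_i(m(\zeta))=s_i(\zeta)\in H^0(\PP^1;\mathcal{O}(2d_i))$, and by Kostant's criterion regularity of $m$ means that $m(\zeta)$ is a regular element of $\Lie{g}^\C$ for every $\zeta$. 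The spectral curve $C_s\subset\mathrm{Tot}(\mathcal{O}(2))$ cut out by $\det(\eta-m(\zeta))=0$ is a branched cover of $\PP^1$ of degree $|W|$; the eigenline data of a regular section is then a real point of a torsor $P_s(\R)$ over a generalised Prym/Jacobian of $C_s$, and $g_s$ is built from the fibrewise holomorphic symplectic form through the residue/Serre-duality pairing used in Lemma~\ref{lem:Regular:twistor:line} on $H^0(\PP^1;m^\ast T^v_F\Lie{X}(2))$. Since this pairing is non-degenerate and varies continuously with $m$, its signature is locally constant, hence constant on each connected component of $M_X(s)$. The whole problem is therefore to count the components of $P_s(\R)$ and to determine the sign of the induced form on each.

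For (i) the positive-definite component $M_X(s)^+$ is furnished by Theorem~\ref{thm:HK:def:HK:cones}. To produce a negative-definite one I would compare the components of $P_s(\R)$: for a real abelian variety the number of connected components of the real locus is governed by the ovals of $C_s(\R)$ and by the action of complex conjugation on $H^1(C_s;\Z)$. I expect the two \emph{extreme} components, differing by translation by an explicit $2$-torsion class that reverses the global orientation of the eigenline framing, to carry metrics of opposite sign, because that translation conjugates the fibrewise holomorphic symplectic form eigenspace-by-eigenspace and hence reverses the sign of the residue pairing. Thus the component obtained from $M_X(s)^+$ by this $2$-torsion shift should be negative-definite. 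What must be checked is that exactly one sign change occurs, rather than none or a mixture; for $\Lie{g}=\Lie{su}(2)$, where $C_s$ is elliptic, this is the classical statement that the two components of the real Jacobian give the two signs, and I would reduce the general case to it along a maximal $\Lie{su}(2)$.

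For (ii) the point is that for $\Lie{g}=\Lie{su}(3)$ the cover $C_s\to\PP^1$ has degree $|S_3|=6$ and, for generic real $s$, is irreducible with full monodromy, so its real Prym has strictly more than two components; I claim at least one carries an indefinite $g_s$. The cleanest way to exhibit such a line is to degenerate toward the reducible locus $\Lie{sl}(2,\C)\oplus\Lie{gl}(1,\C)\subset\Lie{sl}(3,\C)$: on a block section the $\Lie{sl}(2)$-factor produces an $A_1$-type four-dimensional piece whose sign can be chosen \emph{negative} by part (i), while the transverse directions, spanned by the off-block root spaces $\Lie{g}_{\pm\alpha}$, contribute with the \emph{opposite} sign; computing $g_s$ on these root directions through the residue pairing then yields a mixed signature. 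One checks that such a configuration can be taken regular in $\Lie{sl}(3,\C)$ (Kostant), that it lies in a component of $M_X(s)$ distinct from $M_X(s)^\pm$, and that after a small perturbation making $C_s$ irreducible the indefinite signature persists by openness. Openness of indefiniteness then gives an open set of admissible $s$, and density follows because the configuration survives on the open dense locus of $s$ for which $C_s$ is smooth, irreducible, and has the required real structure.

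The hard part is the signature computation itself. Reducing the twistor metric of Lemma~\ref{lem:Regular:twistor:line} to an explicit Hermitian pairing on the period data of $C_s$, and then proving that on the relevant $S_3$-component this pairing is genuinely indefinite — neither positive nor negative definite — is the crux: for the definite components this collapses to a single-sign residue computation as in the $\Lie{su}(2)$ case, but for the mixed component one must control the relative sign of the $A_1$-block contribution against the transverse root-space contributions and show that no deformation within the component can reconcile them. The second delicate point is the density statement in (ii), namely that the indefinite component does not disappear as $C_s$ is made irreducible; this requires showing that the relevant component count of the real Prym is stable under the degeneration used to locate the indefinite line.
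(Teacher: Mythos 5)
Your proposal is a strategy outline rather than a proof: both steps that you yourself flag as ``the crux'' are precisely the content of the theorem, and neither is carried out. For part (i) you miss the elementary mechanism the paper uses: the involution $(T_1,T_2,T_3)\mapsto-(T_1,T_2,T_3)$ reverses the sign of the fibrewise Kirillov--Kostant--Souriau symplectic forms (the Lie--Poisson bracket $\{f,g\}(A)=\langle A,[\nabla f,\nabla g]\rangle$ is odd in $A$), hence carries the positive-definite component supplied by Theorem \ref{thm:HK:def:HK:cones} to a negative-definite component. Your substitute --- translation by a $2$-torsion class on a real Prym which ``reverses the sign of the residue pairing'' --- is left unverified, and is not obviously the right mechanism: already for $\Lie{su}(2)$ the map $A\mapsto-A$ acts on the spectral curve $\eta^2=s(\zeta)$ as the involution $\eta\mapsto-\eta$, which has fixed points and is therefore not a translation. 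You concede that ``exactly one sign change occurs'' remains to be checked, so even granting the spectral-curve framework, part (i) is not established.

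For part (ii) the gap is more serious. The paper never computes the signature of $g_s$ on any component; it exhibits a single null vector. Using the tri-Hamiltonian $G$--action and the identity $J_1Y_{\mu_1(m)}=J_2Y_{\mu_2(m)}=J_3Y_{\mu_3(m)}$ from \cite{Bielawski:orbits}, one gets $g_s(Y,Y)=\langle[\mu_1(m),\mu_3(m)],\mu_2(m)\rangle_{\Lie{g}}$ for $Y=J_1Y_{\mu_1(m)}$, which vanishes exactly when $\langle T_1,[T_2,T_3]\rangle_{\Lie{g}}=0$, while $Y\neq0$ at any regular (hence non-commuting) triple; so $g_s$ is indefinite at every regular point of the cubic hypersurface $D_2=\{\langle A_1,[A_0,A_2]\rangle=0\}$. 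The theorem then reduces to the real-algebraic statement $D_2^\R\not\subseteq D_1^\R$, where $D_1=\{\det d_A\pi=0\}$, proved via irreducibility of $D_2$, a sign-change argument identifying the real ideal of $D_2^\R$, and an explicit triple in $\Lie{sl}(3,\C)$. Your degeneration to $\Lie{sl}(2,\C)\oplus\Lie{gl}(1,\C)$ contains no analogue of this: the claim that the $A_1$-block and the transverse root spaces contribute opposite signs to the residue pairing is asserted, not derived, and your density argument rests on an unproven stability of real Prym components under the degeneration. Until the signature analysis is actually supplied, neither part of the theorem is proved by the proposal.
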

\proof
Part (i) follows immediately from Theorem \ref{thm:HK:def:HK:cones} and the observation that $(T_1,T_2,T_3)\mapsto -(T_1,T_2,T_3)$ sends the positive definite component $M_X(s)^+$ to a component $M_X(s)^-$ on which $g_s$ is negative definite. We shall prove part (ii) in a series of lemmata, only the last one of which is special to the case $\Lie{g}=\Lie{su}(3)$.

First of all, it will be convenient to pass to the complexification $\Lie{g}^\C\otimes\C^3$ of $\Lie{g}\otimes\R^3$. We identify $\Lie{g}^\C\otimes\C^3$ with $H^0(\PP^1;\Lie{g}^\C\otimes\mathcal{O}(2))$ via $(A_0,A_1,A_2)\mapsto A(\zeta)=A_0 + A_1\zeta+A_2\zeta^2$. The triple $(A_0,A_1,A_2)$ is real if $A_0=T_2+iT_3$, $A_1 = 2iT_1$ and $A_2=T_2-iT_3$ for a triple $(T_1,T_2,T_3)\in\Lie{g}\otimes\R^3$.

Now, in the current setting the map in \eqref{eq:Strongly:regular:twistor:line} becomes $d_A\pi\co \Lie{g}^\C\ra \bigoplus_{i=1}^r{H^0(\PP^1;\mathcal{O}(2d_i-2))}$, where $\pi$ is given by evaluation of a basis of Ad-invariant polynomial at $A(\zeta)$. According to Lemma \ref{lem:Regular:twistor:line} we are interested in triples $(A_0,A_1,A_2)$ such that $\det{d_A\pi}\neq 0$. Whenever this is the case, the Lie algebra element $A(\zeta)$ is automatically regular, \ie $d_{A(\zeta)}\pi\co \Lie{g}^\C\ra \C^r$ is surjective, for all $\zeta\in \PP^1$: indeed, given $\zeta$ and $u\in\C^r$, simply choose any section $\phi$ of $\bigoplus_{i=1}^r\mathcal{O}(2d_i-2)$ with $\phi(\zeta)=u$. We conclude that regular twistor lines correspond to triples $(T_1,T_2,T_3)\in \Lie{g}\otimes\R^3\setminus D_1^\R$, where
 \begin{subequations}
 \begin{equation}\label{D2}
 D_1 = \{ (A_0,A_1,A_2)\in \Lie{g}^\C\otimes\C^3\, |\, \det{d_A\pi}=0\}.
 \end{equation}	
 and $D_1^\R$ is the intersection of this hypersurface with $\Lie{g}\otimes\R^3$.
 
 We now introduce a second hypersurface
 \begin{equation}\label{D_1}
 D_2=\{(A_0,A_1,A_2)\in\Lie{g}^\C\otimes\C^3\,|\, \langle A_1,[A_0,A_2]\rangle=0\}.
 \end{equation}
 \end{subequations}
The significance of $D_2$ is explained by the following lemma.  

\begin{lemma} If $(T_1,T_2,T_3)\in M_X(s)$ satisfies  $\langle T_1,[T_2,T_3]\rangle_{\Lie{g}}=0$, then the pseudo-hyperk\"ahler metric $g_s$ is indefinite at $(T_1,T_2,T_3)$.
\end{lemma}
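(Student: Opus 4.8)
The plan is to read off the metric $g_s$ at the twistor line $m$ corresponding to $A(\zeta)=A_0+A_1\zeta+A_2\zeta^2$ directly from the fibrewise symplectic form $\omega^c$ of Proposition~\ref{prop:Twistor:space}, and to exhibit a nonzero null vector under the hypothesis $\langle T_1,[T_2,T_3]\rangle_{\Lie{g}}=0$. For $\eta\in\Lie{g}$ write $X_\eta$ for the real section $\zeta\mapsto[\eta,A(\zeta)]$ of the normal bundle $N_m=m^\ast T^v_F\Lie{X}(2)$; these are the tangent vectors to $M_X(s)$ generated by the infinitesimal $G$--action. Since the real structure on $\Lie{X}=\Lie{g}^\C$ is complex conjugation for the real form $\Lie{g}$, the induced antilinear involution on sections sends $[\xi,A(\cdot)]$ to $[\overline{\xi},A(\cdot)]$. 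Hence the complexified vector $v:=[A_0,A(\cdot)]=X_{T_2}+iX_{T_3}$ has conjugate $\overline v=[A_2,A(\cdot)]=X_{T_2}-iX_{T_3}$, so that $X_{T_2}=\tfrac12(v+\overline v)$ and $X_{T_3}=\tfrac1{2i}(v-\overline v)$.

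The key observation is that $v$ vanishes at $\zeta=0$, since $v(0)=[A_0,A_0]=0$. As $m$ is a regular twistor line, $N_m$ is a sum of $\mathcal O(1)$'s by Lemma~\ref{lem:Regular:twistor:line}, so a section vanishing at $\zeta=0$ is $\zeta$ times a constant section: concretely $v=\zeta\,\tilde v$ with $\tilde v\in E:=H^0(\PP^1;N_m\otimes\mathcal O(-1))$, and then $\overline v=j\tilde v$ is the corresponding constant section, where $j$ is the quaternionic structure on $E$ induced by the real structure. I then evaluate the fibrewise symplectic form on this pair using the Kirillov--Kostant--Souriau description of $\omega^c$ on the adjoint orbits (the symplectic leaves of $\Lie{g}^\C$): because $[A_0,A_2]$ is orthogonal to both $A_0$ and $A_2$ for the Killing form,
\[
\omega^c(v,\overline v)=\langle A(\zeta),[A_0,A_2]\rangle_{\Lie{g}}=\langle A_1,[A_0,A_2]\rangle_{\Lie{g}}\,\zeta=4\,\langle T_1,[T_2,T_3]\rangle_{\Lie{g}}\,\zeta .
\]

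On the other hand, by the twistor correspondence \cite{HKLR} (and the isomorphism $m^\ast T^v_F\Lie{X}(2)\otimes\mathcal O(-2)\simeq(m^\ast T^v_F\Lie{X}(2))^\ast$ already used in Lemma~\ref{lem:Regular:twistor:line}) the form $\omega^c$ descends to a constant holomorphic symplectic form $\omega_E$ on $E$, and the metric's Hermitian form is $h(\cdot,\cdot)=\omega_E(\cdot,j\cdot)$. Feeding $v=\tilde v\otimes\zeta$ and $\overline v=j\tilde v\otimes 1$ into the displayed identity gives $h(\tilde v,\tilde v)=4\,\langle T_1,[T_2,T_3]\rangle_{\Lie{g}}$. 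Since $X_{T_2}=\tfrac12(v+\overline v)$ is the real section attached to $\tfrac12 j\tilde v\in E$, its $g_s$--norm equals $h(\tilde v,\tilde v)$ up to a fixed positive constant, whence
\[
g_s(X_{T_2},X_{T_2})=\mathrm{const}\cdot\langle T_1,[T_2,T_3]\rangle_{\Lie{g}},
\]
and likewise for $X_{T_3}$. Thus when $\langle T_1,[T_2,T_3]\rangle_{\Lie{g}}=0$ the vector $X_{T_2}$ is null. I expect this metric bookkeeping — fixing the identification of $g_s$ with $h$ and the precise constant through the $E\otimes H^0(\mathcal O(1))$ decomposition — to be the main point requiring care.

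It remains to verify that $X_{T_2}$ (or $X_{T_3}$) is nonzero, so that $g_s$ genuinely has a nonzero null vector and, being nondegenerate, is indefinite. If both vanished then $v=[A_0,A(\cdot)]\equiv 0$, i.e.\ $A_0$ would commute with $A_1$ and $A_2$; but $A_0=A(0)$ is regular because $m$ is a regular twistor line, so by Kostant's theorem its centraliser is abelian of dimension $r$ and would contain $A_1,A_2$, forcing the whole section into a single Cartan subalgebra. This is impossible for a regular section: each root evaluated along $A(\zeta)$ is a section of $\mathcal O(2)$ and hence vanishes somewhere on $\PP^1$, at which point $A(\zeta)$ ceases to be regular, contradicting regularity of $m$. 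The nonvanishing of $X_{T_2}$ is therefore a short regularity argument, and completes the proof. (As a sanity check, the same computation shows $\langle T_1,[T_2,T_3]\rangle_{\Lie{g}}$ has a fixed sign on the definite components $M_X(s)^\pm$, consistent with $D_2$ separating them from the indefinite locus.)
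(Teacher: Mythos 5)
Your proof is correct in substance but follows a genuinely different route from the paper's. The paper works downstairs on $M_X(s)$: it invokes the identity $J_1Y_{\mu_1(m)}=J_2Y_{\mu_2(m)}=J_3Y_{\mu_3(m)}$ from the proof of Lemma 2.1 of \cite{Bielawski:orbits} and computes $g_s(Y,Y)|_m=\big\langle[\mu_1(m),\mu_3(m)],\mu_2(m)\big\rangle_{\Lie{g}}$ directly from the moment map equations; its null vector $Y=J_2Y_{T_2}$ is your $X_{T_2}$ rotated by the isometry $J_2$, so the two arguments exhibit the same null quaternionic line. What your version buys is that it stays entirely inside the twistor picture: the Kirillov--Kostant--Souriau pairing of the normal-bundle sections $v=\zeta\,\tilde v$ and $\overline v$ produces $\langle A_1,[A_0,A_2]\rangle_{\Lie{g}}\,\zeta$, which makes transparent why the hypersurface $D_2$ is cut out by exactly this polynomial; the price is the bookkeeping identifying $g_s$ with $\omega_E(\cdot,j\cdot)$ on $E=H^0(\PP^1;N_m\otimes\mathcal{O}(-1))$, which you rightly flag, but this is precisely how the metric is defined in \cite[Theorem 3.3]{HKLR}, so only an (irrelevant) universal positive constant needs checking. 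One small correction in your nonvanishing step: the centraliser of a regular element $A_0\in\Lie{g}^\C$ is abelian of dimension $r$ by Kostant's theorem but need not be a Cartan subalgebra, since $A_0=T_2+iT_3$ can fail to be semisimple even though $T_2,T_3$ lie in the compact form. Argue instead that $v\equiv 0$ forces $A_1,A_2$ into the abelian centraliser of $A_0$, so $A_0,A_1,A_2$, and hence $T_1,T_2,T_3$, commute pairwise; a commuting triple in the compact real form $\Lie{g}$ consists of semisimple elements and therefore lies in a genuine Cartan subalgebra, after which your root-vanishing argument (every section of $\mathcal{O}(2)$ has a zero on $\PP^1$) applies verbatim. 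This last step then coincides with the paper's own contradiction (``the triple is commuting, contradicting its regularity'').
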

\begin{proof} Let $(\mu_1,\mu_2,\mu_3)$ be the pseudo-hyperk\"ahler moment map for the $G$--action on $M_X(s)$. The proof of \cite[Lemma 2.1]{Bielawski:orbits} shows that
\[
J_1Y_{\mu_1(m)}=J_2Y_{\mu_2(m)}=J_3Y_{\mu_3(m)},
\]
where $Y_{\rho}$ denotes the fundamental vector field on $M_X(s)$ corresponding to $\rho\in \Lie{g}$. If $m=(T_1,T_2,T_3)$ satisfies the assumption of the Lemma then the vector $Y=J_1Y_{\mu_1(m)}$ is null at $m$ since
\[
g_s(Y,Y)|_m=g_s(J_1Y_{\mu_1(m)},J_2Y_{\mu_2(m)})|_m=\big\langle d_m\mu_3(Y_{\mu_1(m)}),\mu_2(m)\big\rangle_{\Lie{g}}=\big\langle[\mu_1(m),\mu_3(m)],\mu_2(m)\big\rangle_{\Lie{g}}=0,
\]
where we used the relation between $g_s$, $J_3$ and $\omega_3$, the definition of moment map and \cite[(2.4)]{Bielawski:orbits}. 

On the other hand $Y$ does not vanish at $m$ since otherwise $Y_{\mu_1(m)}=Y_{\mu_2(m)}=Y_{\mu_3(m)}=0$, which, using \cite[(2.3)]{Bielawski:orbits},  means that the triple $(T_1,T_2,T_3)$ is commuting, contradicting its regularity. 
\end{proof}

Owing to the above lemma, part (ii) of Theorem \ref{thm:indefinite:components} is equivalent to $D_2^\R\backslash D_1^\R$ being nonempty.

\begin{lemma} The variety $D_2$ is irreducible.\end{lemma}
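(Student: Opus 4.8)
The plan is to exhibit $D_2$ as the zero locus of an irreducible polynomial. Write $V=\Lie{g}^\C\otimes\C^3=\Lie{g}^\C\oplus\Lie{g}^\C\oplus\Lie{g}^\C$ with coordinates $(A_0,A_1,A_2)$, so that the defining function $f(A_0,A_1,A_2)=\langle A_1,[A_0,A_2]\rangle$ is a polynomial on $V$ that is separately linear in each of the three blocks $A_0$, $A_1$, $A_2$. Since $\C[V]$ is a unique factorisation domain, it suffices to prove that $f$ is irreducible: then $(f)$ is a prime ideal and $D_2=\{f=0\}$ is irreducible.

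First I would record the multigrading. Assign the coordinates of $A_0$, $A_1$, $A_2$ the $\Z^3$-degrees $(1,0,0)$, $(0,1,0)$, $(0,0,1)$ respectively, so that $f$ is multihomogeneous of multidegree $(1,1,1)$. Any factor of a multihomogeneous polynomial is again multihomogeneous, so a nontrivial factorisation $f=g\,h$ would have $g$ and $h$ multihomogeneous with multidegrees summing to $(1,1,1)$. Hence one factor, say $g$, has multidegree $(1,0,0)$, $(0,1,0)$ or $(0,0,1)$: it is a nonzero linear form in the variables of a single block, while $h$ is bilinear in the remaining two blocks. In particular $(1,1,1)$ is not divisible by any integer $\geq 2$, so $f$ cannot be a proper power and irreducibility is the only thing to rule out.

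Next I would use invariance of the Killing form to reduce to a single block. The expression $\langle X,[Y,Z]\rangle$ is the Cartan $3$-form and is totally antisymmetric in $X,Y,Z$; consequently the three possible multidegrees for the linear factor $g$ are interchanged by permuting $A_0,A_1,A_2$, and it is enough to treat the case in which $g$ is a linear form in $A_0$. Rewriting $f=\langle A_1,[A_0,A_2]\rangle=\langle [A_2,A_1],A_0\rangle$, a factorisation $f=\ell(A_0)\,h(A_1,A_2)$ would force $\langle [A_2,A_1],A_0\rangle=\ell(A_0)\,h(A_1,A_2)$ for all $A_0,A_1,A_2$. Identifying $\ell$ with a vector $v\in\Lie{g}^\C$ via the Killing form, this reads $[A_2,A_1]=h(A_1,A_2)\,v$ for all $A_1,A_2$, i.e. the image of the bracket is contained in the line $\C v$. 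Since $\Lie{g}$ is simple it is perfect, so $[\Lie{g}^\C,\Lie{g}^\C]=\Lie{g}^\C$ has dimension $\geq 3$, a contradiction. Therefore $f$ admits no nontrivial factorisation and $D_2$ is irreducible.

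The main, though modest, obstacle is the bookkeeping in the second step: making precise that a nontrivial factorisation of the multilinear $f$ must split off a linear form in one of the three blocks, and then extracting from the resulting identity the statement that the bracket takes values in a line. Once this is set up, the contradiction with perfectness of $\Lie{g}$ is immediate, and the total antisymmetry of the Cartan $3$-form lets us treat the three blocks uniformly rather than case by case.
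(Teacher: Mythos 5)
Your proof is correct, but it takes a genuinely different route from the paper. The paper argues geometrically: $D_2\setminus\{0\}$ is a $\C^\ast$--bundle over a projective hypersurface, hence $D_2$ is connected, and the singular locus of $D_2$ consists of commuting triples, which a rank estimate for the Jacobian of $(A_0,A_1,A_2)\mapsto([A_1,A_2],[A_2,A_0],[A_0,A_1])$ shows to have codimension at least $3$; connectedness plus smallness of the singular locus then forces irreducibility. You instead prove directly that the defining polynomial $f=\langle A_1,[A_0,A_2]\rangle$ is irreducible: the $\Z^3$--multigrading forces any nontrivial factor to be a linear form in a single block, total antisymmetry of the Cartan $3$-form reduces to the block $A_0$, and the resulting identity $[A_2,A_1]=h(A_1,A_2)\,v$ contradicts perfectness of the simple algebra $\Lie{g}^\C$ (of dimension $\geq 3$). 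All steps check out: factors of multihomogeneous polynomials are multihomogeneous in a polynomial ring over a field, and nondegeneracy of the Killing form justifies writing $\ell(A_0)=\langle v,A_0\rangle$. Your approach is more elementary (no Jacobian rank computation, no appeal to the dimension of minimal adjoint orbits) and has the added benefit of delivering exactly what the next lemma in the paper uses, namely irreducibility of the polynomial $p_2$ itself rather than merely of its zero locus -- your observation that the multidegree $(1,1,1)$ rules out $f$ being a proper power closes that gap cleanly. What the paper's argument buys in exchange is the geometric information that the singular locus of $D_2$ (the commuting triples) is of high codimension, though that extra information is not used elsewhere.
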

\begin{proof} $D_2$ is defined by a homogeneous polynomial and hence $D_2\backslash\{0\}$ is a $\C^\ast$--bundle over a hypersurface in a projective space. Therefore $D_2$ is connected. Singular points of $D_2$ correspond to commuting triples $(A_0,A_1,A_2)$, \ie triples in the kernel of the map
\begin{equation}\label{eq:Commuting:Triples}
(A_0,A_1,A_2)\longmapsto \bigl([A_1,A_2],[A_2,A_0],[A_0,A_1]\bigr).
\end{equation}
Let $\tu{Jac}\co \Lie{g}^\C\otimes\C^3\ra \Lie{g}^\C\otimes\C^3$ denote the Jacobian of this map at a nonzero commuting triple $(A_0,A_1,A_2)$. It suffices to show that the rank of $\tu{Jac}$ is at least $4$, for then the set of singular points of $D_2$ has codimension at least $3$ and $D_2$ must be irreducible. 

In order to study the kernel of $\tu{Jac}$, denote by $\Lie{z}$ the common centraliser of $(A_0,A_1,A_2)$ and observe that $\Lie{z}$ is the kernel of $\xi\mapsto \sum_{i=0}^2{[A_i,[A_i,\xi]]}$. Since \eqref{eq:Commuting:Triples} is $G^\C$--invariant, $([A_0,\xi],[A_1,\xi],[A_2,\xi])\in\ker\tu{Jac}$ for any $\xi\in\Lie{g}^\C$. It is then natural to introduce the gauge fixing condition $\sum_{i=0}^2{[A_i,a_i]}=0$. Noting that $\tu{Jac}^2 = -\sum_{i=0}^2{[A_i,[A_i,\,\cdot\,]]}\otimes\tu{id}_{\C^3}$ on gauged fixed triples, we conclude that $\dim\ker\tu{Jac} = 3\dim\Lie{z} + (\dim\Lie{g}^\C-\dim\Lie{z})$ and therefore $\tu{rk}\,\tu{Jac} = 2(\dim\Lie{g}^\C-\dim\Lie{z})\geq 4$ as claimed since $\dim\Lie{g}^\C-\dim\Lie{z}$ is at least the dimension of the minimal non-zero adjoint orbit, which, adjoint orbits being symplectic, is at least $2$. 
\end{proof}

\begin{lemma} If $D_2^\R\subseteq D_1^\R$, then $D_2\subseteq D_1$.\end{lemma}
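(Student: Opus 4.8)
The plan is to prove this by a standard argument relating real points and complex points of a polynomial variety, exploiting the fact (established in the previous lemma) that $D_2$ is \emph{irreducible}. The statement ``$D_2^\R\subseteq D_1^\R$ implies $D_2\subseteq D_1$'' is really an instance of the principle that a real-algebraic inclusion of the real points of an irreducible real variety into a hypersurface forces the same inclusion over $\C$, provided the real points are Zariski-dense in the complex variety.

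First I would observe that $D_1$ is the zero locus of the polynomial $P:=\det d_A\pi$ and $D_2$ the zero locus of the polynomial $Q:=\langle A_1,[A_0,A_2]\rangle$, both defined over $\R$ (this uses that $\pi$ is given by real Ad-invariant polynomials, so $P$ has real coefficients in the real coordinates $(T_1,T_2,T_3)$, and $Q$ is manifestly real). The hypothesis says that $P$ vanishes on every \emph{real} point of $D_2$, i.e.\ on $D_2^\R=D_2\cap(\Lie{g}\otimes\R^3)$. The goal is to upgrade this to $P$ vanishing on all of $D_2$.

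The key step is to show that $D_2^\R$ is Zariski-dense in $D_2$. Since $D_2$ is irreducible and defined over $\R$, and since $Q$ is a real polynomial whose gradient does not vanish identically on $D_2$ (indeed $D_2$ is a reduced hypersurface — $Q$ is irreducible because its zero set is irreducible of the expected dimension), the real points $D_2^\R$ form a real hypersurface of maximal dimension inside the real locus, and hence are Zariski-dense in $D_2$; concretely, the vanishing locus of $P|_{D_2}$ is a proper Zariski-closed subset of $D_2$ unless $P$ vanishes identically on $D_2$, while $D_2^\R$ cannot be contained in any proper complex subvariety of the irreducible $D_2$ because a real-analytic submanifold of real dimension equal to $\dim_\R D_2 = 2\dim_\C D_2 - (\text{something})$\ldots here I would instead argue directly: pick a smooth real point $p\in D_2^\R$ (such points exist and are Zariski-dense among real points, since the singular locus of $D_2$ has complex codimension $\geq 3$ by the previous lemma, so codimension $\geq 3$ over $\R$ as well and cannot swallow the whole real locus). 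Near $p$ the real points $D_2^\R$ form a real-analytic hypersurface in $\Lie{g}\otimes\R^3$ whose complexification is exactly $D_2$ near $p$; since $P$ vanishes on this real hypersurface, its complexification $P|_{D_2}$ vanishes on a set with Zariski closure all of the local branch, and by irreducibility $P$ vanishes on all of $D_2$, giving $D_2\subseteq D_1$.

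The main obstacle is verifying the Zariski-density of $D_2^\R$ in $D_2$, which is where irreducibility of $D_2$ (the content of the preceding lemma) is essential: an irreducible real variety defined by a real polynomial whose real points include a smooth point of full real dimension has Zariski-dense real locus, but one must confirm that $D_2^\R$ is not contained in the (lower-dimensional) singular set — this is guaranteed because the singular locus has complex codimension at least $3$, hence cannot contain a real hypersurface of $D_2^\R$. Once density is in hand, the implication is immediate: a real polynomial vanishing on a Zariski-dense subset of an irreducible complex variety vanishes on the whole variety.
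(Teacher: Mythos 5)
Your overall strategy --- produce a point of $D_2^\R$ that is a \emph{smooth} point of the complex hypersurface $D_2$, observe that near such a point $D_2^\R$ is a maximally totally real, real-analytic submanifold of $D_2$, so that a holomorphic function vanishing on it vanishes on a nonempty open subset of $D_2$ and hence, by irreducibility, on all of $D_2$ --- is viable and genuinely different from the paper's argument, which instead notes that $p_2^\R$ changes sign and invokes the real Nullstellensatz for sign-changing irreducible polynomials \cite[Theorem 4.5.1]{Bochnak} to get $I(D_2^\R)=\langle p_2^\R\rangle$ and hence $p_2\mid p_1$. However, your write-up has a genuine gap exactly at the step that produces the smooth real point. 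You argue that the singular locus of $D_2$ has complex codimension at least $3$ and therefore ``cannot swallow the whole real locus''. This does not follow unless you already know that $\dim_\R D_2^\R=n-1$ (with $n=3\dim\Lie{g}$): for an irreducible complex hypersurface defined over $\R$, the real locus can be low-dimensional and entirely contained in the singular set --- e.g.\ $x^2+y^2+z^2=0$ in $\C^3$ is irreducible with singular locus only the origin, yet its unique real point \emph{is} that singular point. You assert that ``$D_2^\R$ forms a real hypersurface of maximal dimension'' but never justify it, and this is precisely the nontrivial input. The paper supplies it by observing that the restriction of $p_2^\R$ to any line is a polynomial of odd degree $3$, so $p_2^\R$ changes sign on $\Lie{g}\otimes\R^3$, which forces the real zero set to have full dimension $n-1$ (and is also what makes the cited real Nullstellensatz applicable).

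The gap is easy to close within your approach: either import the sign-change observation, or simply exhibit a smooth real point directly, e.g.\ $(T_1,T_2,T_3)=(0,T_2,T_3)$ with $[T_2,T_3]\neq 0$. This lies in $D_2^\R$ since $\langle T_1,[T_2,T_3]\rangle_{\Lie{g}}=0$, and the corresponding triple $(A_0,A_1,A_2)$ is not commuting because $[A_0,A_2]=-2i[T_2,T_3]\neq 0$, hence it is a smooth point of $D_2$ by the identification of singular points with commuting triples in the preceding lemma; the real gradient there is also nonzero, so $D_2^\R$ is locally an $(n-1)$-dimensional totally real submanifold and your identity-theorem argument goes through. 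A minor further remark: ``the zero set of $Q$ is irreducible'' only yields that $Q$ is a power of an irreducible polynomial, not that $Q$ itself is irreducible; here one rules out $Q=c\,\ell^{3}$ because $Q$ is linear in $A_1$, so the conclusion holds, but the inference as you state it is not automatic (for your argument this is harmless, since only the reduced variety matters).
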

\begin{proof} 
Let $p_1,p_2$ denote the defining polynomials of $D_1$ and $D_2$, and $p_1^\R,p_2^\R$ their restriction to $\Lie{g}\otimes\R^3$. Since $p_2$ is irreducible,  so is $p_2^\R$. Moreover, since the restriction of $p_2^\R$ to any line in $\Lie{g}\otimes\R^3$ is a polynomial of degree $3$, $p_2^\R$ changes sign on $\Lie{g}\otimes\R^3$.  This implies that  the ideal $I$ of polynomials vanishing on $D_2^\R$ is equal to  $\langle p_2^\R\rangle$ (cf.\ \cite[Theorem 4.5.1]{Bochnak}). Owing to the assumption, $p_1^\R\in I$, hence $p_1^\R|p_2^\R$ and $p_1|p_2$.
\end{proof}

\begin{lemma} For $\Lie{g}=\Lie{su}(3)$, $D_2\not\subset D_1$.\end{lemma}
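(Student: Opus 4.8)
The plan is to exhibit a single explicit point of $D_2$ at which $\det d_A\pi\neq 0$. Since $D_2=\{p_2=0\}$ with $p_2=\langle A_1,[A_0,A_2]\rangle$ irreducible by the previous lemma, an inclusion $D_2\subseteq D_1$ would force $p_2\mid p_1$, where $p_1=\det d_A\pi$ is the defining polynomial of $D_1$; producing one triple $(A_0,A_1,A_2)\in D_2\setminus D_1$ therefore suffices. For $\Lie{g}=\Lie{su}(3)$ we have $r=2$, with invariant polynomials $\Trace(A^2),\Trace(A^3)$ of degrees $d_1=2,d_2=3$, and $\dim\Lie{sl}(3,\C)=8=\dim H^0(\PP^1;\mathcal{O}(2))+\dim H^0(\PP^1;\mathcal{O}(4))$. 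Writing $A(\zeta)=A_0+A_1\zeta+A_2\zeta^2$, the differential is $d_A\pi(\xi)=\big(\Trace(A(\zeta)\xi),\Trace(A(\zeta)^2\xi)\big)$, so $\det d_A\pi$ is the determinant of the $8\times 8$ matrix whose rows are the traceless parts of $A_0,A_1,A_2,A_0^2,\{A_0,A_1\},\{A_0,A_2\}+A_1^2,\{A_1,A_2\},A_2^2$, paired against a basis of $\Lie{sl}(3,\C)$ via the trace form (here $\{X,Y\}=XY+YX$).

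The first thing I would record is why the obvious points of $D_2$ are useless. If $A_0$ and $A_2$ commute then $[A_0,A_2]=0$ places the triple in $D_2$ for free, but diagonalising them simultaneously shows that $A_0,A_2,A_0^2,A_2^2$ all lie in the $2$-dimensional Cartan and are hence dependent, so $\det d_A\pi=0$ and the point already lies in $D_1$; likewise a diagonal $A_1$ alongside a diagonal $A_0$ forces $A_0,A_1,A_0^2,\{A_0,A_1\}$ into the Cartan. One is therefore forced to take $[A_0,A_2]\neq 0$ with $A_1$ merely \emph{orthogonal} to $[A_0,A_2]$, and to keep $A_1$ off-diagonal but supported on root vectors with distinct $\{A_0,\,\cdot\,\}$-weights, so that $\{A_0,A_1\}$ is not a multiple of $A_1$.

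A clean choice realising all of this is $A_0=\diag(1,2,-3)$, $A_2=\left(\begin{smallmatrix}0&1&2\\1&0&1\\1&1&0\end{smallmatrix}\right)$ and $A_1=E_{12}+\tfrac14 E_{13}$. One checks directly that $\langle A_1,[A_0,A_2]\rangle=0$, so the triple lies in $D_2$. Because $A_0,A_0^2$ are diagonal, $A_1,\{A_0,A_1\}$ are supported on $\operatorname{span}(E_{12},E_{13})$ and $A_1^2=0$ (so that $\{A_0,A_2\}+A_1^2=\{A_0,A_2\}$), the matrix splits, along the decomposition of $\Lie{sl}(3,\C)$ into the Cartan and the root lines, into a $2\times 2$ block from $A_0,A_0^2$ in the Cartan, a $2\times 2$ block from $A_1,\{A_0,A_1\}$ in $\operatorname{span}(E_{12},E_{13})$, and a $4\times 4$ block from $A_2,\{A_0,A_2\},\{A_1,A_2\},A_2^2$ in $\operatorname{span}(E_{21},E_{23},E_{31},E_{32})$. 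Each block determinant is nonzero — they evaluate to $20$, $-\tfrac54$ and $-5$ respectively — so $\det d_A\pi=\pm 125\neq 0$ and the triple lies in $D_2\setminus D_1$, as required.

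The main obstacle is exactly this rigidity: every cheap way of landing in $D_2$ (commuting $A_0,A_2$, or a diagonal $A_1$) collapses the first four covectors into the two-dimensional Cartan and so automatically lands in $D_1$. The substance of the argument is to find a triple generic enough off the diagonal that the eight covectors stay independent while still satisfying the single linear constraint $\langle A_1,[A_0,A_2]\rangle=0$; the support-and-weight bookkeeping above is what makes the resulting $8\times 8$ determinant factorise into small blocks and hence become trivial to evaluate by hand.
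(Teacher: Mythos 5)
Your proof is correct, and I have checked the arithmetic: with $A_0=\diag(1,2,-3)$ one gets $[A_0,A_2]_{21}=1$ and $[A_0,A_2]_{31}=-4$, so $\Trace(A_1[A_0,A_2])=1+\tfrac14(-4)=0$ and the triple lies in $D_2$; the three blocks do evaluate to $20$, $-\tfrac54$ and $-5$, and the block-lower-triangular structure you describe (rows $A_0,A_0^2$ supported on the Cartan, rows $A_1,\{A_0,A_1\}$ supported on positions $(1,2),(1,3)$, the remaining four rows arbitrary) is genuine, so $\det d_A\pi=\pm125\neq0$. The overall strategy is the same as the paper's -- exhibit a point of $D_2\setminus D_1$ -- but the implementation differs. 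The paper works with the order-$3$ grading $\Lie{sl}(3,\C)=V_1\oplus V_\epsilon\oplus V_{\epsilon^2}$ by $\Ad\diag(1,\epsilon,\epsilon^2)$ and takes $A_1\in V_1$ (diagonal) with $A_0,A_2\in V_1\oplus V_\epsilon$, so that membership in $D_2$ is automatic from the grading; it then shows by a genericity argument (first killing the $V_{\epsilon^2}$--components of $A_5,A_6,A_7$, then the $V_\epsilon$-- and $V_1$--components) that a generic such triple has $\det d_A\pi\neq0$. You instead put $A_0$ in the Cartan and $A_1$ nilpotent in the first row, impose the single linear condition $\langle A_1,[A_0,A_2]\rangle=0$ by hand, and verify one completely explicit witness. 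Your route is more elementary and self-contained (no genericity argument, one $8\times8$ determinant that factors into small blocks); the paper's ansatz produces a whole positive-dimensional family inside $D_2$ on which regularity is generic, but since $D_2$ is irreducible a single witness already gives the density needed for Theorem 3.1(ii), so nothing is lost. Two cosmetic remarks: the appeal to $p_2\mid p_1$ in your first paragraph is redundant (one witness in $D_2\setminus D_1$ disproves the inclusion directly; irreducibility of $p_2$ is only needed in the preceding lemma transferring from $\R$ to $\C$), and your motivational claim that commuting $A_0,A_2$ are always simultaneously diagonalisable is not literally true for non-semisimple elements, but since that paragraph is purely heuristic it does not affect the proof.
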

\begin{proof} Let $\epsilon$ be a $3$rd root of unity  and decompose $\Lie{sl}(3,\C)$ into eigenspaces $V_1,V_\epsilon, V_{\epsilon^2}$ of the operator $\Ad \diag(1,\epsilon,\epsilon^2)$. Note that $V_1$ is the maximal torus of diagonal matrices.

 A triple $(A_0,A_1,A_2)$ with $A_1\in V_1$, $A_0,A_2\in V_1\oplus V_\epsilon$ automatically belongs to $D_2$. Noting that
 \[
 d_{A(\zeta)}\pi = \left( \langle A(\zeta),\, \cdot\, \rangle_{\Lie{sl}(3)}, \langle A(\zeta)^2,\, \cdot\, \rangle_{\Lie{sl}(3)}\right),
 \]
we need to find such a triple with $A_0,\dots,A_7$ linearly independent, where
\[
A_3=A_0A_1+A_1A_0,\qquad A_4= A_2A_1+A_1A_2,\qquad A_5=A_0^2,\qquad A_6=A_2^2,\qquad A_7=A_1^2+A_0A_2+A_2A_0,
\]
are the coefficients of $A(\zeta)^2$.

Suppose that $\sum_{i=0}^7\lambda_i A_i=0$ is a relation. Owing to the assumption on $A_0,A_1,A_2$, only $A_5,A_6,A_7$ have entries in $V_{\epsilon^2}$ and these entries are quadratic monomials in the entries of $A_0$ and $A_2$ lying in $V_\epsilon$. Generically, therefore, the  $V_{\epsilon^2}$--parts of $A_5,A_6,A_7$ are linearly independent, and hence $\lambda_5=\lambda_6=\lambda_7=0$. The equation on the $V_\epsilon$--part is then
\[
\lambda_0A_0^\epsilon+\lambda_2A_2^\epsilon+\lambda_3A_3^\epsilon+\lambda_4A_4^\epsilon=0,
\]
where $A_i^\epsilon$ denotes the $V_\epsilon$--part of $A_i$. Thus generically $(\lambda_0,\lambda_2,\lambda_3,\lambda_4)$ belong to a $1$-dimensional subspace. Consequently, denoting similarly by $A_i^1$ the $V_1$-part of each $A_i$,
\[
\lambda_0A_0^1+\lambda_2A_2^1+\lambda_3A_3^1+\lambda_4A_4^1
\]
spans a $1$-dimensional subspace of $V_1$ and, if we choose $A_1^1=A_1$ not in this subspace, then $\lambda_i=0$ for each $i=0,\dots,7$.
\end{proof}

Given the genericity of triples in $D_2\setminus D_1$, part (ii) of Theorem \ref{thm:indefinite:components} now follows from the above four lemmata.
\endproof

\subsection{Hyperk\"ahler quotients at arbitrary level\label{arbitary;level}} If $M$ is a symplectic manifold with a Hamiltonian action of a Lie group $G$ and moment map $\mu$, then one can form a symplectic quotient $\mu^{-1}(l)/\tu{Stab}_G(l)$ at any level $l\in \Lie{g}^\ast$ \cite{Guillemin:Sternberg}. This symplectic quotient is isomorphic to the symplectic quotient of the product $M \times \left( G\cdot(-l)\right)$ at level $0$, where $G\cdot(-l)$ is the coadjoint orbit of $-l$ with its homogeneous Kirillov--Kostant--Souriau symplectic form. In other words $\mu^{-1}(l)/\tu{Stab}_G(l)$ can be identified with $\mu^{-1}(G\cdot l)/\tu{Stab}_G(l)$. 
\par
Theorem \ref{thm:indefinite:components} allows us to define hyperk\"ahler analogues of symplectic quotients at arbitrary levels. Let $M$ be a hyperk\"ahler manifold with a tri-Hamiltonian action of a compact Lie group $G$ and let $(l_1,l_2,l_3)\in \Lie{g}^\ast\otimes\R^3$. Since the hyperk\"ahler quotient can be done in stages for the centre of $G$ and for its semisimple part, we can assume without loss of generality that $G$ is semisimple. We can then identify $\Lie{g}^\ast$ with $\Lie{g}$ using the Killing form. The triple $(l_1,l_2,l_3)$ determines a real section $l(\zeta)=(l_2+il_3) + 2il_1 \zeta +(l_2-il_3)\zeta^2$ of $\Lie{g}^\C\otimes\mathcal{O}(2)$ and we assume that $l(\zeta)\in\Lie{g}^\C$ is regular for every $\zeta\in\PP^1$. Consider the real section $s$ of  $\Lie{g}^\C\otimes\mathcal{O}(2)/G^\C\simeq \bigoplus_{i=1}^r \mathcal{O}(2d_i) $ determined by $-l(\zeta)$. We now define the hyperk\"ahler quotient of $M$ at level $(l_1,l_2,l_3)$ as the usual hyperk\"ahler quotient of $M\times M_X(s)^+$ at level $0$, where $X$ is the nilpotent cone of $\Lie{g}^\C$.
The definition is justified by the fact that the twistor space of the hyperk\"ahler quotient of  $M\times M_X(s)^+$ is the fibrewise complex symplectic quotient of the twistor space of $M$ at level $l(\zeta)$, $\zeta\in\PP^1$. We emphasize that we need a positive-definite component of $M_X(s)$ for this construction to work: the pseudo-hyperk\"ahler quotient of $M\times M_X(s)$ will usually exhibit various pathologies.

\begin{remark*}
When $s=q(-\xi)$ for a regular commuting triple $\xi$, the hyperk\"ahler quotient of $M\times M_X(s)^+$ coincides with Kronheimer's definition of the hyperk\"ahler quotient of $M$ at level $\xi$ \cite[\S 4.(b)]{Kronheimer:Coadjoint}. While Kronheimer's definition appears to depend on the triple $\xi$ itself and not only on its $W$--orbit, \ie the associated section $s$, note in fact that Kronheimer's metrics on regular complex adjoint orbits parametrised by regular commuting triples in the same $W$--orbit are isomorphic as hyperk\"ahler manifolds: in terms of Nahm's equations these isomorphisms are realised by the action of gauge transformations asymptotic at infinity to arbitrary elements in the normaliser of the maximal torus, instead of the torus itself.
\end{remark*}

The application of Theorem \ref{thm:HK:def:HK:cones} to deformations of (the normalisation of) a more general nilpotent orbit closure $X$ instead of the nilpotent cone of $\Lie{g}^\C$ could be used to define hyperk\"ahler quotients at a more general (non-regular) level $(l_1,l_2,l_3)$ in a similar way. The standard hyperk\"ahler quotient of $M$ at level zero corresponds to the choice $X=\{ 0\}$.

\subsection{Kleinian singularities}
We now consider a general $4$-dimensional hyperk\"ahler cone, which must be of the form $X=\C^2/\Gamma$ for a finite subgroup $\Gamma$ of $\sunitary{2}$. Via the McKay correspondence each of these groups is associated to a simply laced Dynkin diagram and therefore a simple Lie algebra of type A, D or E. By work of Grothendieck, Brieskorn and Slodowy the McKay correspondence can also be used to describe the deformations of $X$, cf.\ \cite{Slodowy}.

Let $\Lie{g}$ a simple Lie algebra of type A, D or E. The nilpotent cone $\mathcal{N}$ in $\Lie{g}^\C$ does not have an isolated singularity. In particular, $\mathcal{N}$ has (real) codimension-$4$ singularities along the subregular nilpotent orbit: the transverse singularity type is precisely $X=\C^2/\Gamma$ for the finite subgroup $\Gamma$ corresponding to $\Lie{g}$ via the McKay correspondence. More precisely, fix an element $e$ in the subregular nilpotent orbit and denote by $(h,e,f)$ the associated $\Lie{sl}(2)$--triple in $\Lie{g}^\C$. Let $S_e$ be the Slodowy slice to the subregular nilpotent orbit through $e$, \ie $S_e = e + \ker{[f,\,\cdot\,]}$. Then $S_e\cap\mathcal{N}=X$ and $\pi\co \Lie{X}=S_e \hookrightarrow \Lie{g}^\C\ra \Lie{h}^\C/W=T_X$ is the universal graded Poisson deformation of $X$ \cite{Lehn:Namikawa:Sorger}.   

Theorem \ref{thm:HK:def:HK:cones} yields hyperk\"ahler structures parametrised by real sections $s$ of $\bigoplus_{i=1}^r\mathcal{O}(2d_i)$, where $d_1,\dots, d_r$ are the degrees of a basis of $\tu{Ad}$-invariant polynomials on $\Lie{g}$. Since in this case $X$ is smooth outside the vertex $o$, the discussion after Theorem \ref{thm:HK:def:HK:cones} implies that all these metrics are ALE asymptotic to $\C^2/\Gamma$. 

\begin{theorem}\label{thm:ALE}
Let $\Gamma$ be a finite subgroup of $\sunitary{2}$ and denote by $d_1,\dots, d_r$ the degrees of a basis of Ad-invariant polynomials on the simple Lie algebra $\Lie{g}$ corresponding to $\Gamma$ via the McKay correspondence. Then for every real section $s$ of $\bigoplus_{i=1}^r{H^0(\PP^1;\mathcal{O}(2d_i))}$ there exists an ALE hyperk\"ahler metric $g_s$ defined on a large enough exterior domain in $\C^2/\Gamma$.	
\end{theorem}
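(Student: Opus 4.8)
The plan is to read off the statement from Theorem~\ref{thm:HK:def:HK:cones} and the asymptotic analysis following it, using the two features special to $X=\C^2/\Gamma$: it is smooth away from the vertex $o$, and its cone metric $g_0$ is the \emph{flat} metric on $\C^2/\Gamma$. First I would note that $X=\C^2/\Gamma$ is a conical hyperk\"ahler variety in the sense of Definition~\ref{def:Singular:HK:Cones}: the link $X_{reg}\cap\{r=1\}$ is the compact smooth $3$-Sasaki manifold $S^3/\Gamma$, so Proposition~\ref{prop:Smooth:HK:Cones} applies and $X_{reg}=(\C^2/\Gamma)\setminus\{o\}$ carries the flat conical hyperk\"ahler structure. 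Theorem~\ref{thm:HK:def:HK:cones} then gives, for every real section $s$ of $\bigoplus_{i=1}^r\mathcal{O}(2d_i)$, the positive-definite hyperk\"ahler manifold $(M_X(s)^+,g_s)$, and it remains to identify an exterior domain on which $g_s$ is asymptotically flat.

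Second, I would extract the exterior domain. Because $X$ is smooth outside $o$, the link $M_X(0)^+\cap\{r=1\}\cong S^3/\Gamma$ is compact, so in the discussion following Theorem~\ref{thm:HK:def:HK:cones} one may take $\mathcal{U}$ to be the entire link; then $\tu{C}_1(\mathcal{U})=\{r\geq 1\}$ is a genuine exterior domain in $\C^2/\Gamma$. That discussion (an application of the Implicit Function Theorem together with the scaling action) provides, for every $s$ in a neighbourhood $\mathcal{V}$ of $0$, an open subset of $M_X(s)^+$ real-analytically diffeomorphic to this exterior domain. To reach an arbitrary $s$ I would use the homothety $g_{t\cdot s}=t\,g_s$: for real $t>0$ the automorphism $\psi_t$ identifies $(M_X(s)^+,g_s)$ with $(M_X(t\cdot s)^+,g_{t\cdot s})$ up to an overall constant, and a homothety preserves the ALE condition after the corresponding rescaling of the asymptotic coordinate. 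Choosing $t$ small enough that $t\cdot s\in\mathcal{V}$ then endows $M_X(s)^+$ with an open subset diffeomorphic to some exterior domain $\{r\geq R\}$.

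Third, I would upgrade the convergence of blow-downs recorded after Theorem~\ref{thm:HK:def:HK:cones} to a quantitative decay of $g_s$ towards $g_0$, trading large radius for a small deformation parameter via the $\C^\ast$-action. Writing $\psi_t^\ast g_{t\cdot s}=t\,g_s$ with $t=r^{-2}$, so that $\psi_{r^{-2}}$ dilates the radial coordinate by $r^{-1}$, the metric $g_s$ near radius $r$, rescaled to unit size, becomes $g_{r^{-2}\cdot s}$ near the link, where the $i$-th component of the parameter $r^{-2}\cdot s$ scales like $r^{-2d_i}s_i$ and hence tends to $0$ as $r\to\infty$. Since the blow-down family depends real-analytically on this parameter and equals the flat $g_0$ at parameter value $0$, a Taylor expansion yields $|g_s-g_0|_{g_0}=O(r^{-2d_{\min}})$ together with the analogous estimates on $g_0$-covariant derivatives; as the quadratic Casimir always contributes the degree $d_{\min}=2$, the conclusion is that $g_s$ is ALE of order $4$.

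The hard part will be this last step: promoting the \emph{smooth} convergence of the blow-downs on a fixed compact piece of the link to \emph{uniform} polynomial decay of $g_s-g_0$ and all its derivatives over the whole exterior domain. This requires controlling the dependence on $s$ of the Implicit Function Theorem solutions for the regular twistor lines, and of the metric they induce, uniformly in the base point---which is where the real-analyticity of Namikawa's deformation $\pi\co\Lie{X}\ra T_X$ and of the section spaces enters. Everything else reduces, through the scaling structure, to the behaviour of the construction near the parameter value $0$, where by part~(iv) of Definition~\ref{def:Singular:HK:Cones} it reproduces precisely the flat cone.
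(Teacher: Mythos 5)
Your proposal matches the paper's own argument, which consists precisely of your first two steps: apply Theorem~\ref{thm:HK:def:HK:cones} to $X=\C^2/\Gamma$ (with universal Poisson deformation the Slodowy slice over $\Lie{h}^\C/W$) and observe that, since $X$ is smooth outside the vertex, the relatively compact set $\mathcal{U}$ in the discussion following that theorem can be taken to be the whole link $S^3/\Gamma$, so the construction produces the metric on a genuine exterior domain, asymptotic to the flat cone in the blow-down sense. The quantitative decay rate you pursue in your third step (and correctly flag as the hard part) is not claimed or proved in the paper, whose notion of ``ALE'' here is only the scaled smooth convergence $g_{\varepsilon\cdot s}\to g_0$ already established after Theorem~\ref{thm:HK:def:HK:cones}.
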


The minimal resolution $\widetilde{X}$ of $X$ is $\widetilde{S}_e\cap\widetilde{\Lie{g}}^\C$, where $\widetilde{S}_e$ is the preimage of $S_e$ in the Grothendieck simultaneous resolution $\widetilde{\Lie{g}}^\C\ra\Lie{g}^\C$ of the adjoint quotient. The universal graded Poisson deformation of $\widetilde{X}$ is $\widetilde{S}_e\ra\Lie{h}^\C$ and Kronheimer's ALE metrics \cite{Kronheimer:ALE:Construction} on $\widetilde{X}$ coincide with the hyperk\"ahler metrics of Theorem \ref{thm:ALE} when $s=q(\xi)$ for a generic triple $\xi\in\Lie{h}\otimes\R^3$, interpreted as a real section of $\Lie{h}^\C\otimes\mathcal{O}(2)$. In particular, Theorem \ref{thm:ALE} yields a purely twistorial construction of Kronheimer's metrics for all $\Gamma$. Since Kronheimer's metrics are the unique complete ALE metrics \cite{Kronheimer:ALE:Classification}, all other choices of $s$ in Theorem \ref{thm:ALE} yield incomplete ALE ends.

\begin{remark*}
The hyperk\"ahler manifolds of Theorem \ref{thm:ALE} are hyperk\"ahler Slodowy slices \cite{Bielawski:Slodowy:Slices,Bielawski:Slices:Sums} to the D'Amorim Santa-Cruz's manifolds of Theorem \ref{thm:indefinite:components}(i), \ie with respect to any complex structure compatible with the metric $M_{\C^2/\Gamma}(s)^+=\mu^{-1}(S_e)\subset M_\mathcal{N}(s)^+$ for the holomorphic symplectic moment map $\mu\co M_\mathcal{N}(s)^+\ra\Lie{g}^\C$. The fact that Kronheimer's ALE metrics are Slodowy slices to regular adjoint orbits is \cite[Theorem 2]{Bielawski:ALE}. 	
\end{remark*}

\subsubsection{An explicit example}

In the simplest case $X=\C^2/\Z_2$ we can be extremely explicit. In this case the cone metric admits a triholomorphic $\sunitary{2}$--action inherited by all metrics produced by Theorem \ref{thm:ALE} (in fact $X$ is the nilpotent cone of $\Lie{sl}(2,\C)$). The large symmetry group reduces the problem to the study of an ODE system, the solutions of which are explicitly known \cite{Belinskii}.

Theorem \ref{thm:ALE} yields hyperk\"ahler structures parametrised by a real section of $\mathcal{O}(4)\ra\PP^1$. Under the natural action of $\sorth{3}=\tu{P}\unitary{2}$ on $\PP^1$, the parameter space $H^0(\PP^1;\mathcal{O}(4))^\R$ is identified with the irreducible representation $\tu{Sym}^2_0(\R^3)$ of $\sorth{3}$. This action of $\sorth{3}$ corresponds to a hyperk\"ahler rotation of the hyperk\"ahler structure, so the metrics produced by Theorem \ref{thm:ALE} are in fact parametrised by $\tu{Sym}^2_0(\R^3)/\sorth{3} \simeq \{ (\lambda_1,\lambda_2,\lambda_3)\in \R^3\, |\, \lambda_1+\lambda_2+\lambda_3=0, \lambda_1\leq \lambda_2\leq \lambda_3\}$. The metrics are explicit \cite{Belinskii}: up to a reparametrisation $(\lambda_1,\lambda_2,\lambda_3)\mapsto (a_1,a_2,a_3)$ with $0=a_1\leq a_2\leq a_3$,
\[
g_{\lambda_1,\lambda_2,\lambda_3} = \frac{1}{\sqrt{\Lambda_1(r)\Lambda_2(r)\Lambda_3(r)}} \left( dr^2 + \tfrac{1}{4}r^2\sum_{i=1}^3{\Lambda_j(r) \Lambda_k(r)\, \sigma_i^2}\right) 
\]  
where $\Lambda_i(r) = 1-\frac{16a_i}{r^4}$ and $(\sigma_1,\sigma_2,\sigma_3)$ is the standard basis of left-invariant $1$-forms on $\sorth{3}$. The metric is a priori only defined for $r^4 > 16 a_3$ and is asymptotic to the flat orbifold metric $g_{0,0,0}$ on $\C^2/\Z_2$ as $r\ra\infty$. If $0<a_2=a_3$ the metric $g_{\lambda_1,\lambda_2,\lambda_3}$ extends smoothly at $r^4 = 16 a_3$ over a lower dimensional $\sorth{3}$--orbit $S^2$ and coincides with the Eguchi--Hanson metric on $T^\ast S^2$. All other metrics are incomplete: for example, if $0=a_2<a_3$ then as $r^4 \ra 16 a_3$
\[
\sqrt{\tfrac{3}{2}a_3}\,  g_{\lambda_1,\lambda_1,\lambda_2} \approx  d\rho^2 + \tfrac{2}{3}\rho^{\frac{3}{2}}(\sigma_1^2+\sigma_2^2) + \rho^{-\frac{3}{2}}\sigma_3^2, 
\]   
where we define $\rho$ by $r^4 = 16 a_3\left(1+\tfrac{2}{3}\rho^3\right)$.

\begin{remark*}
In the context of cohomogeneity one manifolds, construction of families of asymptotically conical ends which are backward complete only for special choices of parameters is not unique to this $4$-dimensional hyperk\"ahler case: see for example \cite{Dancer:Wang} in the Einstein case and \cite{FHN} in the $\gtwo$--holonomy setting.  	
\end{remark*}

Finally, we give another interpretation of the parameter space $\tu{Sym}^2_0(\R^3)$ of the hyperk\"ahler metrics produced by Theorem \ref{thm:ALE} in terms of infinitesimal deformations of the conical hyperk\"ahler structure $\triple{\omega}_{\tu{C}}=(\omega_1,\omega_2,\omega_3)$ on $\C^2/\Z_2$. Note that for any harmonic function $f$ on $\C^2/\Z_2$, the $2$-forms $dd^\ast (f\omega_i)$, $i=1,2,3$, are anti-self-dual and that in dimension $4$ triples of anti-self-dual $2$-forms are infinitesimal hyperk\"ahler deformations. The only $\sunitary{2}$--invariant decaying harmonic function on $\C^2/\Z_2$ is the Green's function and therefore we obtain a $9$-parameter family of infinitesimal hyperk\"ahler deformations. The parameter space is naturally identified with $\tu{Mat}_{3,3}(\R)$ and the isometric action of $\sorth{3}$ that rotates the complex structures acts on this space by conjugation. A $4$-dimensional subfamily of these infinitesimal deformations actually coincides with Lie derivatives (note that $dd^\ast (f\omega_i)=\mathcal{L}_{\nabla f}\omega_i$), thus leaving a $5$-parameter subfamily, naturally identified with $\tu{Sym}^2_0(\R^3)$, of genuine infinitesimal hyperk\"ahler deformations: Theorem \ref{thm:HK:def:HK:cones} (and the explicit description in \cite{Belinskii}) shows that these infinitesimal deformations can all be integrated to hyperk\"ahler structures on large enough exterior domains in $\C^2/\Z_2$.

\subsubsection{Application to codimension-$4$ singularities of $\tu{G}_2$--holonomy metrics}\label{sec:3dHitchin}

We conclude with a brief discussion of a possible application of the incomplete ALE ends of Theorem \ref{thm:ALE} to the study of local deformations of special holonomy metrics with codimension-$4$ singularities and the expectation in theoretical physics that these deformations have a dual gauge theory description.

The model for this expectation is a result of Diaconescu--Donagi--Pantev \cite{Diaconescu:Donagi:Pantev} which relates the moduli space of Higgs bundles on a Riemann surface to certain non-compact complex $3$-folds with trivial canonical bundle. More precisely, Diaconescu--Donagi--Pantev consider a holomorphic fibration of $\C^2/\Gamma$--singularities along a Riemann surface $C$ so that the total space $\mathcal{Y}_0$ has trivial canonical bundle. Using the versal deformation of the singularity $\C^2/\Gamma$ they then construct a family of $3$-folds $\mathcal{Y}_q$ with trivial canonical bundle parametrised by holomorphic sections $q$ of $\bigoplus_{i=1}^r{K_C^{d_i}}$, where $K_C$ is the canonical bundle of $C$ and $d_1,\dots, d_r$ are as in Theorem \ref{thm:ALE}. Interpreting $\bigoplus_{i=1}^r{H^0(C;K_C^{d_i})}$ as the base of the Hitchin fibration on the moduli space of $G^\C$--Higgs bundles on $C$, Diaconescu--Donagi--Pantev further identify Hitchin's integrable system on the moduli space of Higgs bundles and the integrable system of intermediate Jacobians of the family $\mathcal{Y}$ away from their discriminant.

Pantev--Wijnholt \cite{Pantev:Wijnholt} introduced a similar conjectural picture in the context of codimension-$4$ singularities of $7$-dimensional metrics with holonomy $\tu{G}_2$.

On the geometry side, we consider a $\tu{G}_2$--holonomy metric with singularities along a smooth (associative) submanifold $Q^3$ with transverse singularity modelled on $\C^2/\Gamma$. More precisely, we only retain the information of the leading order behaviour of the $\tu{G}_2$--holonomy metric near the singular locus $Q$, which we assume is modelled as follows. On the $3$-manifold $Q$ we are given a Riemannian metric $g_Q$ and a spin structure $\mathcal{S}$ endowed with the connection induced by the Levi-Civita connection of $g_Q$. We then consider the fibration $\mathcal{W}_0=\mathcal{S}\times_{\sunitary{2}}\C^2/\Gamma$, where $\sunitary{2}$ acts on $\C^2/\Gamma$ via the isometric action that rotates the complex structures (if $\Gamma$ is a cyclic group, then we can exploit the triholomorphic $\unitary{1}$--action on $\C^2/\Gamma$ to consider more generally a fibration $\widetilde{\mathcal{S}}\times_{\unitary{2}}\C^2/\Gamma$, where $\widetilde{\mathcal{S}}$ is a $\tu{Spin}^\C$--s
 tructure on $Q$ with any connection inducing the Levi-Civita connection of $g_Q$ on the frame bundle $\widetilde{\mathcal{S}}\times_{\unitary{2}}\tu{PU}(2)$). Then on $\mathcal{W}_0$ we consider the Riemannian metric $g_{\mathcal{W}_0}=g_Q + g_{\C^2/\Gamma}$, where we implicitly use the connection on $\mathcal{S}$. (Note that this metric does not have holonomy $\tu{G}_2$ unless $Q$ is flat, but it models the leading-order behaviour of singular $\tu{G}_2$--holonomy metrics on $\mathcal{W}_0$ as we approach the singular set $Q$.)

On the gauge theory side, Pantev--Wijnholt consider the system describing stable flat $G^\C$--connections on $Q$:
\begin{equation}\label{eq:PW:system}
	F_A-[\phi,\phi]=0, \qquad d_A\phi=0=d_A^\ast\phi.
\end{equation}
 Here $G$ is the simple Lie group associated with $\Gamma$ via the McKay correspondence, $A$ is a connection on a principal $G$--bundle $P\ra Q$ and $\phi\in\Omega^1(Q;\ad P)$.
 
 The expectation in \cite{Pantev:Wijnholt} is that the moduli space of solutions to \eqref{eq:PW:system} encodes the data of local $\gtwo$--holonomy deformations of $g_{\mathcal{W}_0}$ (plus additional information encoded in the moduli of a $C$--field which complexifies the $\gtwo$ moduli space). When $[\phi,\phi]=0$ we can make sense of this expectation, at least at a formal level, using Donaldson's description of adiabatic limits of coassociative fibrations \cite{Donaldson:Adiabatic:Coassociatives}. Indeed, in this case locally in $Q$ we can write $\phi=\sum_{i=1}^3{\phi_i\, dx_i}$ for a map $\triple{\phi}=(\phi_1,\phi_2,\phi_3)\co Q\ra \Lie{h}\otimes\R^3$. Interpreting the target as the parameter space of Kronheimer's ALE metrics on the minimal resolution of $\C^2/\Gamma$, we can now desingularise $g_{\mathcal{W}_0}$ by replacing the $\C^2/\Gamma$--fibres with the hyperk\"ahler metrics parametrised by $\triple{\phi}$. The original equations satisfied by $\phi$ imply that the map $
 \triple{\phi}$ satisfies Donaldson's adiabatic equations. For example, in the simplest case $\C^2/\Z_2$, $\triple{\phi}$ is just a (nowhere vanishing) harmonic $1$-form and Joyce--Karigiannis \cite{Joyce:Karigiannis} use the resulting approximate $\gtwo$--metric as a building block in a desingularisation procedure of $\gtwo$--orbifolds $M/\Z_2$. For a discussion in the case of general $\Gamma$ see also \cite{Barbosa}.
 
The new hyperk\"ahler metrics of Theorem \ref{thm:ALE} allow us to extend this formal picture also to the case where $[\phi,\phi]\neq 0$. As a first step, we extend the definition of Hitchin's map on the moduli space of Higgs bundles on a Riemann surface to the $3$-dimensional system \eqref{eq:PW:system}.

 \begin{lemma}\label{lem:3d:Hitchin}
 There exists an $\sunitary{2}$--equivariant, $G$--invariant map $\Lie{g}\otimes \R^3\ra \bigoplus_{i=1}^r{\tu{Sym}^{2d_i}(W)^\R}$, where $W$ is the standard representation of $\sunitary{2}$.
 \proof
 Think of $\Lie{g}\otimes \R^3$ as $H^0(\PP^1;\Lie{g}^\C\otimes\mathcal{O}(2))^\R$ and apply the map \[
 H^0(\PP^1;\Lie{g}^\C\otimes\mathcal{O}(2))^\R\longrightarrow \bigoplus_{i=1}^r{H^0(\PP^1;\Lie{g}^\C\otimes\mathcal{O}(2d_i))^\R}
 \]
 induced by a basis of Ad-invariant polynomials on $\Lie{g}$.
 \endproof	
 \end{lemma}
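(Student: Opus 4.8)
The plan is to construct the map explicitly by the recipe indicated in the statement---evaluating a basis of invariant polynomials on sections of $\Lie{g}^\C\otimes\mathcal{O}(2)$---and then to verify the three structural properties it must enjoy: that it lands in the claimed bundle, that it is $G$--invariant and $\sunitary{2}$--equivariant, and that it respects the relevant real structures. All of the conceptual content has already been assembled earlier in the section, so the argument is mostly a matter of naturality, with the reality check the only delicate point.

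First I would fix the identification $\Lie{g}\otimes\R^3\simeq H^0(\PP^1;\Lie{g}^\C\otimes\mathcal{O}(2))^\R$ already used above, under which a triple $(T_1,T_2,T_3)$ corresponds to the real section $A(\zeta)=(T_2+iT_3)+2iT_1\zeta+(T_2-iT_3)\zeta^2$; here the real structure on $H^0(\PP^1;\Lie{g}^\C\otimes\mathcal{O}(2))$ combines the antipodal map on $\PP^1$ with complex conjugation on $\Lie{g}^\C$ relative to the real form $\Lie{g}$. Choosing a basis $p_1,\dots,p_r$ of $\tu{Ad}$--invariant homogeneous polynomials on $\Lie{g}^\C$ with $\deg p_i=d_i$, I would define the map by
\[
A\longmapsto\bigl(p_1(A),\dots,p_r(A)\bigr).
\]
Since $A$ is a section of $\Lie{g}^\C\otimes\mathcal{O}(2)$ and $p_i$ is homogeneous of degree $d_i$, the composite $p_i(A)$ is a section of $\mathcal{O}(2)^{\otimes d_i}=\mathcal{O}(2d_i)$, which pins down the target bundle.

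The invariance and equivariance then follow from naturality of evaluation. $G$--invariance is immediate, as $G$ acts on the source only through the adjoint action on the $\Lie{g}^\C$ factor and trivially on $\mathcal{O}(2)$, while the $p_i$ are $\tu{Ad}$--invariant by construction. For $\sunitary{2}$--equivariance, observe that $\sunitary{2}$ acts on the source only through the $\mathcal{O}(2)$ factor, via the $\sorth{3}=\tu{P}\unitary{2}$--action on $\PP^1$; because the evaluation of a fixed polynomial commutes with pullback by automorphisms of the base, the induced maps on sections intertwine the $\sunitary{2}$--actions on $H^0(\PP^1;\Lie{g}^\C\otimes\mathcal{O}(2))$ and on each $H^0(\PP^1;\mathcal{O}(2d_i))$.

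The step I expect to require genuine care---and hence the main obstacle---is compatibility with the real structures, that is, that real sections are carried to real sections. The key observation is that the invariant polynomials may be chosen to take real values on the compact real form $\Lie{g}$ (equivalently, to be $W$--invariant with real coefficients on a real form of $\Lie{h}^\C$ under Chevalley restriction), so that each $p_i$ intertwines complex conjugation on $\Lie{g}^\C$ with ordinary conjugation of its output; combined with the antipodal lift on $\mathcal{O}(2d_i)$, this forces $p_i(A)$ to be fixed by the reality involution on $H^0(\PP^1;\mathcal{O}(2d_i))$ whenever $A$ is real. Finally, I would record the standard $\sunitary{2}$--representation isomorphism $H^0(\PP^1;\mathcal{O}(2d_i))\simeq\tu{Sym}^{2d_i}(W)$ (using $W\simeq W^\ast$), which identifies the real points of the target with $\bigoplus_{i=1}^r\tu{Sym}^{2d_i}(W)^\R$ and completes the construction.
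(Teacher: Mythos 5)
Your proposal is correct and follows exactly the paper's argument: identify $\Lie{g}\otimes\R^3$ with $H^0(\PP^1;\Lie{g}^\C\otimes\mathcal{O}(2))^\R$ and apply the map induced by a basis of $\tu{Ad}$--invariant polynomials. The paper states this in one line, and your verifications of the target bundle, the invariance and equivariance properties, and the compatibility with the real structures are the natural (and correct) expansions of that line.
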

 
Now, $\phi\in\Omega^1(Q;\ad\, P)$ is a section of a bundle with fibre $\Lie{g}\otimes \R^3$. Fibrewise application of the map of Lemma \ref{lem:3d:Hitchin} yields a section $s_\phi$ of $\bigoplus_{i=1}^r{\tu{Sym}^{2d_i}(\slashed{S})^\R}$, where $\slashed{S}$ is the spinor bundle of $Q$.

 
In the analogy with the Calabi--Yau setting, the map $\phi\mapsto s_\phi$ plays the role of Hitchin's map since the fibres of $\bigoplus_{i=1}^r{\tu{Sym}^{2d_i}(\slashed{S})^\R}$ over $Q$ are precisely the parameter space of the hyperk\"ahler metrics of Theorem \ref{thm:ALE}. As in the special case $[\phi,\phi]=0$, we expect that \eqref{eq:PW:system} implies that the map $s_\phi$ satisfies Donaldson's adiabatic limit equations and therefore provides the data to define an approximate (generally incomplete!) $\gtwo$--metric deforming the model metric $g_{\mathcal{W}_0}$.   
     
\begin{remark*}
The incomplete hyperk\"ahler metrics of Theorem \ref{thm:ALE} do not arise in the Calabi-Yau setting of \cite{Diaconescu:Donagi:Pantev} since they are automatically excluded if we fix the moduli of one K\"ahler form in the hyperk\"ahler triple.	
\end{remark*}

\end{document}